





\documentclass[sn-mathphys]{sn-jnl}



\jyear{2021}%

\theoremstyle{thmstyleone}%
\newtheorem{theorem}{Theorem}[section]
%
\newtheorem{corollary}[theorem]{Corollary}
\newtheorem{problem}[theorem]{Problem}

\theoremstyle{thmstyletwo}%
\newtheorem{example}{Example}%
\newtheorem{remark}{Remark}%

\theoremstyle{thmstylethree}%
\newtheorem{definition}{Definition}%

\usepackage{xcolor}
\usepackage[makeroom]{cancel}
\definecolor{mypink1}{rgb}{1.00,0.10,1.00}
\raggedbottom

\def\cA{{{\mathcal A}}}

\def\cC{{{\mathcal C}}}

\def\cF{{{\mathcal F}}}
\def\cG{{{\mathcal G}}}
\def\cH{{{\mathcal H}}}

\def\cK{{{\mathcal K}}}

\def\cS{{{\mathcal S}}}

\def\cQ{{{\mathcal Q}}}

\def\N{\mathbb N}

\def\R{\mathbb R}
\def\dfrac#1#2{{\displaystyle{#1\over#2}}}

\def\eps{\varepsilon}
\def\epsilon{\eps}

\def\01^N{\{0,1\}^{n}}

\def\1^K{\{0,1\}^{k}}
\def\n+1pla{(i_1,\cdots,i_{n+1})}
\def\u^n+1{\{0,1\}^{n+1}}

\def\cero{0_X}

\def\normx{\| x \|}

\begin{document}

\title[Sublinear Scalarizations for Proper and Approximate Proper Efficiency]{Sublinear Scalarizations for Proper and Approximate Proper Efficient Points in Nonconvex Vector Optimization}


\author*[ ]{\fnm{Fernando} \sur{García-Castaño}}\email{fernando.gc@ua.es}

\author[ ]{\fnm{Miguel Ángel} \sur{Melguizo-Padial}}\email{ma.mp@ua.es}
\equalcont{These authors contributed equally to this work.}

\author[ ]{\fnm{G.} \sur{Parzanese}}\email{gpzes@yahoo.it}
\equalcont{These authors contributed equally to this work.}

\affil{\orgname{$^{1}$University of Alicante},  \orgaddress{\street{Carretera de San Vicente del Raspeig s/n},  \postcode{03690}, \city{San Vicente del Raspeig - Alicante},  \country{Spain}}}

%


\abstract{We show that under a separation property, a $\cQ$-minimal point in a normed space is the minimum of a given sublinear function. This fact provides sufficient conditions, via scalarization, for nine types of proper efficient points; establishing a characterization in the particular case of Benson proper efficient points.  We also obtain necessary and sufficient conditions in terms of scalarization for approximate Benson  and Henig proper efficient points.   The separation property we handle is a variation of another known property and our scalarization results  do not require convexity or boundedness assumptions.}

\keywords{Scalarization, proper efficiency, $\cQ$-minimal point, approximate proper efficiency,  nonconvex vector optimization,  nonlinear cone separation}


\pacs[MSC Classification]{90C26, 90C29,  90C30, 90C46, 49K27,46N10}

\maketitle

\section{Introduction}
Proper efficient points  were introduced to eliminate efficient points exhibiting some abnormal properties. They can be described in terms of separations between the ordering cone and the considered set.  Such points have been  the object of many investigations, see for example  \cite{Benson1979,Borwein1993,Gong2005,Guerraggio1994,Ha2010,Hartley1978,Henig1982,Zalinescu2015,Zheng1997}.  In \cite{Ha2010}, the author presented the notion of $\cQ$-minimal point and showed that several types of proper efficient points can be reduced in a unified form as $\cQ$-minimal points.  The following kinds of proper efficient points were studied in \cite{Ha2010}: Henig global proper efficient points, Henig proper efficient points, super efficient points, Benson proper efficient points, Hartley proper efficient points, Hurwicz proper efficient points, and Borwein proper efficient points; the latter three types considered for the first time.  Optimality conditions for proper efficient points were obtained, and scalarization for $\cQ$-minimal points was established.  Since the scalar optimization theory is widely developed, scalarization turns out to be of great importance for the vector optimization theory \cite{Ehrgott2005,Ehrgott2005b,Jahn2004,Zalinescu2015,Luc1989,Miettinen1999,Pascoletti1984,Qiu2008,Zheng2000}. In this work, we show that under a separation property called SSP, a $\cQ$-minimal point in a normed space is the minimum of a given sublinear function.  This fact provides sufficient conditions for the proper efficient points analysed in  \cite{Ha2010} and also for tangentially Borwein proper efficient points which were not considered there. The sufficient condition becomes a characterization in case of Benson proper efficient points. We note that SSP is a variation of a separation property introduced in  \cite{Kasimbeyli2010}. On one hand, our results complement those obtained in  \cite{Ha2010} making use  of a different scalar function  and also establishing conditions for tangentially Borwein proper efficient points. In our results,  for every type of  proper efficient point, we apply the separation property to a fixed $\cQ$-dilation of the ordering cone instead of to a sequence of $\epsilon$-conic neigbhourhoods  of the ordering cone (as it is done in  \cite{Kasimbeyli2010}). This fact leads us to optimal conditions for nine types of proper efficient points (instead of two types  in  \cite{Kasimbeyli2010}) deriving scalarization results in the setting of normed spaces under weaker assumptions than those in \cite{Kasimbeyli2010} and making use of the same scalar function.  In addition, our characterization of Benson proper efficient points shed light to the last question stated in the conclusions of \cite{Guo2017}.
 
Recently, several authors have been interested in introducing and studying approximate proper efficiency notions.  The common idea in the concepts of approximate efficiency is to consider a set that approximates the ordering cone, that does not contain the origin in order to impose the approximate efficiency (or non-domination) condition.  In \cite{Gutierrez2012,Gutierrez2016}, the notions of approximate proper efficient points in the senses of Benson and Henig were introduced extending and improving the most important concepts of approximate proper efficiency given in the literature at the moment. In addition,  the authors characterized such approximate efficient points through scalarization assuming generalized convexity conditions. In this manuscript, we adapt  the approach followed to obtain optimal conditions for $\cQ$-minimal points to establish new characterizations of Benson and Henig approximate proper efficient points  through scalarizations.  Again, our results are based on SSP and we do not impose any kind of convexity assumption. So, our results complement those in \cite{Gutierrez2012,Gutierrez2016}.

The paper is organized as follows. We introduce preliminary terminology in Section \ref{sec:notation}.  In Section \ref{sec:tmas_separacion_conos}, we introduce SSP and establish two separation theorems, Theorems \ref{thm:primer_tma_separacion_C_y_K} and \ref{thm:separacion_para_aplicar_Q_minimales}. The first one provides an extension of \cite[Theorem 4.3]{Kasimbeyli2010} to normed spaces in which some assumptions for the equivalence have been relaxed, and the latter, that is our main separation result,  provides some optimal conditions for proper and approximate proper efficient points. Theorem~\ref{thm:scalarization_Q_minimals} shows that under SSP,  $\cQ$-minimal points can be obtained minimizing a sublinear function explicitly defined.  Corollary~\ref{coro:scalarization_proper_minimals} particularizes the former necessary condition for each type of proper efficient point.  Corollary~\ref{coro:Caract_Benson_Scalar_Problem} characterizes Benson proper efficient points via scalarization, and Corollaries \ref{coro:charact_scalarization_GHe} and  \ref{coro:charact_scalarization_TBo} characterize, respectively, Henig global and tangentially Borwein proper efficient points under some extra assumptions.  In Section \ref{sec:approximate_proper}, we recall the notions of approximate efficiency in the sense of Benson and of Henig. Theorems~\ref{thm:escalar_neces_approximate_Benson} and \ref{thm:escalar_sufficient_approximate_Benson} provide, respectively, necessary and sufficient conditions through scalarization for approximate Benson proper efficient points; in a similar way but under extra assumptions, we establish Corollary \ref{cor:escalar_characterization_approximate_Henig} that characterizes approximate Henig proper points. 
\section{Notation and previous definitions}\label{sec:notation}
Throughout the paper $X$ will denote a normed space, $\| \cdot \|$ the norm on $X$, $X^*$ the dual space of $X$, $\| \cdot \|_*$ the norm on $X^*$, and  $0_X$  the origin of $X$. By $B_X$ (resp. $B^{\circ}_X$) we denote the closed (resp. open) unit ball of $X$ and by $S_X$ we denote the unit sphere, i.e., $B_X:=\{x \in X \colon \| x \| \leq 1\}$, $B^{\circ}_X=\{x \in X \colon \| x \| < 1\}$, and $S_X:=\{x \in X \colon \| x \| = 1\}$.
Given a subset $S \subset X$, we denote by $\overline{S}$ (resp. bd($S$), int($S$), co($S$), $\overline{\mbox{co}}(S)$) the closure (resp. the boundary, the interior, the convex hull, the closure of the convex hull) of $S$.  Besides, for every $f\in X^*$,  we will denote by $\sup_Sf$ (resp. $\inf_Sf$) the supremum (resp. infimum) of $f$ on the set $S$. By $\mathbb{R}_+$ (resp. $\mathbb{R}_{++}$) we denote the set of non negative real numbers (resp.  strictly positive real numbers). A subset $\mathcal{C}\subset X$ is said to be a cone if $\lambda x \in \mathcal{C}$ for every $\lambda \geq 0$ and $x \in \mathcal{C}$. Let  $\mathcal{C}\subset X$ be a cone: $\mathcal{C}$ is said to be non-trivial if $\{\cero\}\subsetneq \mathcal{C}\subsetneq X$,  $\mathcal{C}$ is said to be convex if it is a convex subset of $X$, $\mathcal{C}$ is said to be pointed if $(-\mathcal{C})\cap \mathcal{C}=\{0_X\}$, and $\mathcal{C}$ is said to be solid if $\mbox{int}(\cC)\not = \emptyset$.  All cones in this manuscript are assumed to be non-trivial unless stated otherwise.  From now on, $A_0$ denotes  $A\cup \{0_X\}$ for every subset $A\subset X$.  An open cone $\cQ\subset X$ is an
 open set such that $\cQ_0$ is a (non-trivial) cone.  An open cone $\cQ$ is said to be pointed (resp. convex) if $\cQ_0$ is a pointed (resp. convex) cone on $X$. Fixed a subset $S \subset X$, we define the cone generated by $S$ as cone$(S):=\{\lambda s\colon \lambda\geq 0,\, s \in S\}$ and $\overline{\mbox{cone}}(S)$ stands for the closure of cone$(S)$. A non-empty convex subset $B$ of a convex cone $\cC$ is said to be a base for $\cC$ if $\cero \not \in \overline{B}$ and  for every $x \in \cC\setminus\{\cero\}$ there exist unique $\lambda_x > 0$, $b_x\in B$ such that $x=\lambda_x b_x$. 
Given a cone $\cC\subset X$,  its dual cone is defined by $\cC^*:=\{f \in X^*\colon f(x)\geq 0,\, \forall x \in \cC\}$ and the set of strictly positive functionals by $\cC^{\#}:=\{f \in X^*\colon f(x)> 0,\, \forall x \in \cC,x\not = \cero\}$.  
In general, $\mbox{int}(\cC^*)\subset \cC^{\#}$.  It is known that a convex cone $\cC\subset X$ has a base if and only if $\cC^{\#}\not = \emptyset$, and $\cC^{\#}\not = \emptyset$ implies that  $\cC$ is pointed. In particular, for every $f \in \cC^{\#}$, the set $B:=\{x \in \cC\colon f(x)=1\}$ is a base for $\cC$. A convex cone $\cC$ is said to have a bounded base if there exists a base $B$ for $\cC$ such that it is a bounded subset of $X$. It is known that $\cC$ has a bounded base if and only if $\mbox{int}(\cC^*)\not = \emptyset$ if and only if $\cero$ is a denting point for $\cC$ (see \cite[Theorem 3.8.4]{Jameson1970} for the first equivalence and \cite{GARCIACASTANO2018,GARCIACASTANO20151178,GARCIACASTANO2021} for further information about dentability and optimization).

A convex cone $\cC$ is said to have a (weak) compact base if there exists a base $B$ of $\cC$ which is a (weak) compact subset of $X$.   A pointed cone admits a compact base if and only if it is locally compact  if and only if the cone satisfies the strong property $(\pi)$  if and only if  there exists $f\in \cC^{\#}$ such that $\cC\cap \{f \leq \lambda\}$ is compact, $\forall \lambda>0$.  We refer the reader to \cite[p. 338]{Kothe1983} for the first equivalence,  to \cite[Definition 2.1]{Han1994} for the definition of strong property $(\pi)$ and to \cite[Remark 2.1]{Qiu2001} for the last two equivalences.  The following two sets are called augmented dual cones of a given cone $\cC$ and they were introduced in \cite{Kasimbeyli2010}, $\cC^{a*}:=\{(f,\alpha)\in \cC^{\#}\times \R_+ \colon f(x)-\alpha\| x \| \geq 0,\, \forall x \in \cC\}$ and $\cC^{a\#}:=\{(f,\alpha)\in \cC^{\#}\times \R_+ \colon f(x)-\alpha\| x \| > 0,\, \forall x \in \cC,x \not = \cero\}$. Clearly $\cC^{a\#}\subset \cC^{a*}$. Now, we introduce the following augmented dual cones $\cC^{a*}_+:=\{(f,\alpha)\in \cC^{\#}\times \R_{++} \colon f(x)-\alpha\| x \| \geq 0,\, \forall x \in \cC\}$, $\cC^{a\#}_+:=\{(f,\alpha)\in \cC^{\#}\times \R_{++} \colon f(x)-\alpha\| x \| > 0,\, \forall x \in \cC,x \not = \cero\}$. It is clear that $\cC^{a\#}_+\subset \cC^{a*}_+$. 

Let $\cC\subset X$ be a pointed convex cone, then $\cC$ provides a partial order on $X$, say $\leq$, in the following way, $x \leq y \Leftrightarrow y-x\in \cC$. In this situation, we say that $X$ is a partially ordered normed space and $\cC$ is the ordering cone. Let $X$ be a partially ordered normed space, $\cC\subset X$ the ordering cone, and $A\subset X$ a subset. We say that $x_0\in A$ is an efficient (or Pareto minimal) point of $A$, written $x_0 \in \mbox{Min}(A,\cC)$, if $(x_0-\cC)\cap A=\{x_0\}$. Next, we define some types of proper efficient points.  Note that (i)-(iii) and (v)-(viii) are taken from  \cite[Definition 21.3]{Ha2010} (see also \cite[Definition 2.4.4]{Zalinescu2015}), (iv) is taken from \cite{Makarov1999} adapting from maximal to minimal proper efficient point,   and (ix) is taken  from \cite{Eichfelder2014}.  The latter was obtained adapting \cite[Definition~2]{Borwein1977} and was called Borwein proper efficient point, but we have changed the name to distinguish (v) and (ix) below. Recall that fixed a subset $A\subset X$ and a point $\bar{x} \in \overline{A}$, the contingent cone to $A$ at $\bar{x}$ is defined by  
$T(A,\bar{x}):=\{\lim_n\lambda_n(x_n-\bar{x}) \in X\colon (\lambda_n)\subset \R_{++} \mbox{, } (x_n)\subset A \mbox{,  and }\lim_nx_n=\bar{x}\}$,
see \cite{aubin1990set} for details.

\begin{definition} \label{defi:proper_minimal_point} 
Let $X$ be a partially ordered normed space, $\cC$ the ordering cone, and $A \subset X$ a subset.
\begin{itemize}
\item[(i)]  $x_0 \in A$ is called a positive proper efficient point of $A$, $x_0 \in \mbox{Pos}(A,\cC)$, if there exists $f\in \cC^{\#}$ such that $f(x_0)=\inf_A f$.
\item[(ii)] $x_0 \in A$ is called a Hurwicz proper efficient point of $A$, $x_0 \in \mbox{Hu}(A,\cC)$, if $\overline{\mbox{co}}(\cK) \cap (-\cC) =\{\cero\}$ for $\cK=\mbox{cone}((A-x_0) \cup \cC)$.
\item[(iii)] $x_0 \in A$ is called a Benson proper efficient point of $A$, $x_0 \in \mbox{Be}(A,\cC)$, if  $\cero\in \mbox{Min}(\overline{\mbox{cone}}(A+\cC-x_0),\cC)$. 
\item[(iv)] $x_0 \in A$ is called a Hartley proper efficient point of $A$, $x_0 \in \mbox{Ha}(A,\cC)$, if $x_0 \in \mbox{Min}(A,\cC)$ and there exists $M > 0$ such that if  $f \in \cC^*$ and $f(x-x_0)<0 $ for some $x \in A$, then there exists $g \in \cC^*$, $g\not = 0$,  satisfying $\Vert g\Vert f(x-x_0)  \geq -\Vert f \Vert Mg(x-x_0)$.  
\item[(v)]  $x_0 \in A$ is called a Borwein proper efficient point of $A$, $x_0 \in \mbox{Bo}(A,\cC)$, if $\overline{\mbox{cone}}(A-x_0)\cap (-\cC) =\{\cero\}$. 
\item[(vi)] $x_0 \in A$ is called a Henig global proper efficient point of $A$, $x_0 \in \mbox{GHe}(A,\cC)$, if $x_0 \in \mbox{Min}(A,\cK)$ for some convex cone $\cK$ such that $\cC\setminus \{\cero\}\subset \mbox{int}(\cK)$. 
\item[(vii)] $x_0 \in A$ is called a Henig  proper efficient point of $A$, $x_0 \in \mbox{He}(A,\cC)$, if $\cC$ has a base $B$ and there exists $\epsilon>0$ such that $\overline{\mbox{cone}}(A-x_0)\cap (-\overline{\mbox{cone}}(B+\epsilon B_X))=\{\cero\}$.
\item[(viii)] $x_0 \in A$ is called a super efficient point of $A$, $x_0 \in \mbox{SE}(A,\cC)$, if there exists $\rho >0$ such that $\overline{\mbox{cone}}(A-x_0)\cap (B_X-\cC) \subset \rho B_X$.
\item[(ix)] $x_0\in A$ is a tangentially Borwein proper efficient point of $A$, written $x_0 \in \mbox{TBo}(A,\cC)$, if $T(A+\cC,x_0)\cap(-\cC)=\{\cero\}$.
\end{itemize}
\end{definition}
Proper efficient points were introduced for two main reasons: first, to eliminate certain anomalous minimal points; second, to establish some equivalent scalar problems whose solutions provide at least most of the minimal points. We have the following chain of inclusions (see \cite[Proposition 21.4]{Ha2010} and \cite[Proposition 2.4.6]{Zalinescu2015}).
$\mbox{Pos}(A,\cC)\subset \mbox{Hu}(A,\cC)\subset \mbox{Be}(A,\cC)$, $\mbox{Pos}(A,\cC)\subset \mbox{GHe}(A,\cC)$, $\mbox{SE}(A,\cC)\subset \mbox{Ha}(A,\cC)\subset \mbox{Be}(A,\cC)\subset \mbox{Bo}(A,\cC)$, $\mbox{SE}(A,\cC)\subset\mbox{GHe}(A,\cC)\subset \mbox{Be}(A,\cC)$, $\mbox{SE}(A,\cC)\subset \mbox{He}(A,\cC)$.  In addition, by \cite[Theorem 3.44]{Jahn2004}, if we have $T(A+\cC,x_0)\subset \overline{\mbox{cone}}(A+\cC,x_0)$, then $\mbox{Be}(A,\cC)\subset \mbox{TBo}(A,\cC)$. Furthermore, under extra assumptions on $A$ or $\cC$,  we can find more inclusions (again \cite{Ha2010,Zalinescu2015}). 

\section{Separation theorems and scalarization for Q-minimal points}\label{sec:tmas_separacion_conos}
We begin this section introducing a separation property of cones called strict separation property (SSP for short).  Later, we establish two  theorems which separate two cones in a normed space by a sublevel set of a sublinear function. The second separation theorem will be key to determining optimality conditions for the proper efficient points introduced in section \ref{sec:notation}.

Let us recall the following. Throughout the paper, we denote by $A_0$ the set $A\cup \{\cero\}$, for any $A\subset X$. On the other hand,  every cone $\cC\subset X$ we consider is assumed non-trivial, i.e.,  $\{\cero\}\subsetneq \mathcal{C}\subsetneq X$.  For any cone $\cC$, we define the following convex sets $\cC_{\scriptscriptstyle \wedge}  :={\mbox{co}}(\cC\cap S_X)$ and $\cC_{\scriptscriptstyle \vee}:={\mbox{co}}((\mbox{bd}(\cC)\cap S_X)_0)$ to be used in the following separation property.
\begin{definition}\label{def:estrict_separation}
Let $X$ be a normed space and $\cC$, $\cK$ cones on $X$.  We say that the pair of cones $(\cC, \,\cK)$ has the strict separation property (SSP for short) if $\cero\not \in \overline{\cC_{\scriptscriptstyle \wedge}-\cK_{\scriptscriptstyle \vee}}$.
\end{definition}

%

\begin{remark}\label{remSSPcomplementario}
Given two cones $ \cC,\cK \subset X$ we have the following.
\begin{itemize}
\item[(i)]  $(\cC,\cK)$ has SSP $\Leftrightarrow (\overline{\cC},\cK)$ has SSP  $\Leftrightarrow (\cC,\overline{\cK})$ has SSP $\Leftrightarrow (\overline{\cC},\overline{\cK})$ has SSP.
\item[(ii)] Since $\mbox{bd}(A)=\mbox{bd}(X \setminus A)$ for every subset $A\subset X$ and  $\mbox{bd}(\cK)=\mbox{bd}(\cK\setminus \{\cero\})$ for every non-trivial cone $\cK\subset X$, it follows that $(\cC, \,\cK)$ has SSP if and only if  $(-\cC, \,-\cK)$ has SSP if and only if $(\cC, \,(X \setminus \cK)_0)$ has SSP.
\end{itemize}

\end{remark}
In \cite[Definition 4.1]{Kasimbeyli2010} the author introduces the separation property SP for closed cones under the condition $\cero \not \in \overline{\cC_{\scriptscriptstyle \wedge}}-\overline{\cK_{\scriptscriptstyle \vee}}$. It is clear that, for closed cones, SSP implies SP. The reverse  is true in reflexive Banach spaces because on such spaces the closure of the  difference of two bounded convex sets equals the difference of the closures of the sets (referring to the Minkowski difference).

The following result provides a version of \cite[Theorem~4.3]{Kasimbeyli2010} in the setting of general normed spaces. It is worth pointing out that we do not need extra assumptions  to obtain an equivalence (such as the cones to be convex and closed or the space to be of finite dimension). 

\begin{theorem}\label{thm:primer_tma_separacion_C_y_K}
Let $X$ be a normed space and $\cC$, $\cK$ cones on $X$.  The following assertions are equivalent.
\begin{itemize}
\item[(i)] $(\cC, \,\cK)$ has SSP.
\item[(ii)] There exist $\delta_2>\delta_1 > 0$ and $f \in X^*$ such that $(f,\alpha)\in \cC^{a\#}_+$ and $0< f(y)+\alpha \| y \|$  for every $\alpha \in (\delta_1,\delta_2)$  and $y \in \mbox{bd}(-\cK)$, $y\not =\cero$.
\item[(iii)] There exist $\delta_2>\delta_1 > 0$ and $f \in X^*$ such that $(f,\alpha)\in \cC^{a\#}_+$ and $f(x)+\alpha \normx<0< f(y)+\alpha \| y \|$ for every $\alpha \in (\delta_1,\delta_2)$,  $x \in -\overline{co}(\cC )$,  $x\not =\cero$, and $y \in \mbox{bd}(\textcolor{black}{-\cK})$, $y\not =\cero$.
\end{itemize}
\end{theorem}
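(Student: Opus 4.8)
The plan is to prove the cycle $(i)\Rightarrow(iii)\Rightarrow(ii)\Rightarrow(i)$, with all the real work concentrated in the first implication, which is a Hahn--Banach separation argument applied to the two bounded convex sets $\cC_{\scriptscriptstyle \wedge}$ and $\cK_{\scriptscriptstyle \vee}$. Assume SSP, so $\cero\notin\overline{\cC_{\scriptscriptstyle\wedge}-\cK_{\scriptscriptstyle\vee}}$. Since $\cC_{\scriptscriptstyle\wedge}-\cK_{\scriptscriptstyle\vee}$ is convex, its closure is a closed convex set not containing the origin, so by the Hahn--Banach theorem there exist $g\in X^*$ and $\gamma>0$ with $g(c)-g(k)\geq\gamma$ for all $c\in\cC_{\scriptscriptstyle\wedge}$, $k\in\cK_{\scriptscriptstyle\vee}$. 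Because $\cero\in(\mbox{bd}(\cK)\cap S_X)_0\subset\cK_{\scriptscriptstyle\vee}$, taking $k=\cero$ gives $g(c)\geq\gamma$ for every $c\in\cC\cap S_X$, hence $g(x)\geq\gamma\|x\|$ for every $x\in\cC$; in particular $g\in\cC^{\#}$. Symmetrically, fixing any $c_0\in\cC\cap S_X$ (nonempty since $\cC$ is non-trivial) we get $g(k)\leq g(c_0)-\gamma$ for every $k\in\cK_{\scriptscriptstyle\vee}$, so $g$ is bounded above on the cone generated by $(\mbox{bd}(\cK)\cap S_X)_0$; a cone on which a linear functional is bounded above must have that functional nonpositive, so $g(y)\leq 0$ for every $y\in\mbox{bd}(\cK)\cap S_X$, and since the bound is in fact strictly negative away from $\cero$ after rescaling, one extracts $g(y)\leq-\gamma\|y\|$, i.e. $g(-y)\geq\gamma\|-y\|$, for every $y\in\mbox{bd}(-\cK)$, $y\neq\cero$.

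Now calibrate the constant. Set $f:=g$ after normalising so that the separation constant $\gamma$ becomes a number strictly between two chosen thresholds: concretely, choose $\delta_1,\delta_2$ with $0<\delta_1<\gamma/\|g\|_*<\delta_2$ (and if $g=0$ the separation is impossible, so $\|g\|_*>0$). For any $\alpha\in(\delta_1,\delta_2)$ we then have $\alpha\|g\|_*<\gamma\cdot(\delta_2/(\gamma/\|g\|_*))$... more cleanly: after rescaling $f$ so that $\|f\|_*$ is fixed, the inequality $f(x)\geq\gamma\|x\|$ on $\cC$ with $\gamma>\alpha$ yields $f(x)-\alpha\|x\|>0$ for all $x\in\cC\setminus\{\cero\}$ and all $\alpha<\gamma$; the homogeneous inequality extends from $\cC\cap S_X$ to all of $\overline{\mbox{co}}(\cC)$ by convexity and closedness, since $x\mapsto f(x)-\alpha\|x\|$ is concave and positive on the sphere-section of $\cC$. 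Likewise $f(y)-\alpha\|y\|>0$ for $y\in\mbox{bd}(-\cK)$, $y\neq\cero$, provided $\alpha<\gamma$. Choosing $\delta_2\leq\gamma/\|f\|_*$ (rescaling $f$ once more if needed so that $\delta_2>\delta_1$ can be arranged) makes all of these hold simultaneously for every $\alpha\in(\delta_1,\delta_2)$, giving $(f,\alpha)\in\cC^{a\#}_+$ together with the strict positivity on $-\overline{\mbox{co}}(\cC)\setminus\{\cero\}$ and on $\mbox{bd}(-\cK)\setminus\{\cero\}$; rewriting $f(x)-\alpha\|x\|>0$ for $x\in\cC$ as $f(-x)+\alpha\|-x\|<0$ for $-x\in-\cC$ yields exactly the two-sided inequality in $(iii)$.

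The implication $(iii)\Rightarrow(ii)$ is immediate: $(iii)$ contains $(ii)$ verbatim once one discards the left-hand inequality $f(x)+\alpha\|x\|<0$, noting that $-\overline{\mbox{co}}(\cC)\setminus\{\cero\}$ already includes $-(\cC\cap S_X)$, which is what is used to witness $(f,\alpha)\in\cC^{a\#}_+$ --- actually $(f,\alpha)\in\cC^{a\#}_+$ is part of the statement of $(ii)$ itself, so nothing is lost. For $(ii)\Rightarrow(i)$, from $(f,\alpha)\in\cC^{a\#}_+$ with $\alpha>\delta_1>0$ we get $f(c)\geq\alpha\|c\|=\alpha$ for $c\in\cC\cap S_X$, hence $f\geq\alpha$ on $\cC_{\scriptscriptstyle\wedge}$; from $0<f(y)+\alpha\|y\|$ for $y\in\mbox{bd}(-\cK)$, $y\neq\cero$, i.e. $f(z)<\alpha\|z\|\leq\alpha$ for $z\in\mbox{bd}(\cK)\cap S_X$, and $f(\cero)=0<\alpha$, we get $f<\alpha$ on the generating set of $\cK_{\scriptscriptstyle\vee}$, hence $f\leq\alpha$ on $\cK_{\scriptscriptstyle\vee}$; but then I need strict separation with a gap. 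Here one uses that $\cC\cap S_X$ is separated from the sphere by a positive margin because of the strict augmented inequality on the closed set $\mbox{bd}(\cK)\cap S_X$ together with a compactness-free argument: since $\alpha\in(\delta_1,\delta_2)$ ranges over an open interval, pick $\alpha'<\alpha$ in that interval; then $f\geq\alpha>\alpha'$ on $\cC_{\scriptscriptstyle\wedge}$ while $f\leq\alpha'$ is false in general, so instead one observes directly that $\inf_{\cC_{\scriptscriptstyle\wedge}}f-\sup_{\cK_{\scriptscriptstyle\vee}}f\geq\alpha-\alpha=0$ is not yet a strict gap, and the fix is to use two different parameters $\alpha_1<\alpha_2$ in $(\delta_1,\delta_2)$: $f\geq\alpha_2$ on $\cC_{\scriptscriptstyle\wedge}$ (applying the $\cC^{a\#}_+$ membership at $\alpha_2$) and $f\leq\alpha_1$ on $\cK_{\scriptscriptstyle\vee}$ (applying the boundary inequality at $\alpha_1$), giving a gap $\alpha_2-\alpha_1>0$, whence $\cero\notin\overline{\cC_{\scriptscriptstyle\wedge}-\cK_{\scriptscriptstyle\vee}}$, which is SSP.

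\textbf{Main obstacle.} The delicate point is the bookkeeping of the constants in $(i)\Rightarrow(iii)$: one must produce a \emph{single} $f$ and an open interval $(\delta_1,\delta_2)$ of admissible $\alpha$'s for which \emph{all three} conditions ($(f,\alpha)\in\cC^{a\#}_+$, negativity on $-\overline{\mbox{co}}(\cC)\setminus\{\cero\}$, positivity on $\mbox{bd}(-\cK)\setminus\{\cero\}$) hold simultaneously, using only that the Hahn--Banach gap $\gamma$ is strictly positive and that both $\cC_{\scriptscriptstyle\wedge}$, $\cK_{\scriptscriptstyle\vee}$ are bounded. The boundedness of these two convex sets (each lies in $B_X$) is precisely what guarantees the linear functional $g$ stays uniformly separated from $\alpha\|\cdot\|$ on the relevant sphere-sections and, after scaling $f$ so that $\gamma/\|f\|_*$ lands in the interior of a prescribed interval, lets the homogeneous extension from $S_X$-sections to the full cones (and to $\overline{\mbox{co}}(\cC)$) go through by concavity of $x\mapsto f(x)-\alpha\|x\|$. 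I expect the rest --- the reverse implications --- to be routine once the two-parameter trick above is in place.
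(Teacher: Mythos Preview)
Your overall strategy---Hahn--Banach separation of $\cero$ from $\overline{\cC_{\scriptscriptstyle\wedge}-\cK_{\scriptscriptstyle\vee}}$, then calibrating an interval of $\alpha$'s---is right, and your two-parameter trick for $(ii)\Rightarrow(i)$ is correct and essentially the paper's argument for $(iii)\Rightarrow(i)$. However, your $(i)\Rightarrow(iii)$ contains a genuine error. From $g(k)\leq g(c_0)-\gamma$ for $k\in\cK_{\scriptscriptstyle\vee}$ you conclude that $g$ is bounded above on the \emph{cone} generated by $(\mathrm{bd}(\cK)\cap S_X)_0$, and thence that $g\leq 0$ there. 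But $\cK_{\scriptscriptstyle\vee}$ is the convex hull, not the cone: it sits inside $B_X$, and boundedness of $g$ on $\cK_{\scriptscriptstyle\vee}$ says nothing about $g(\lambda y)$ for $\lambda$ large. In fact $S:=\sup_{\cK_{\scriptscriptstyle\vee}} g$ may well be strictly positive, and no rescaling of a \emph{linear} functional can shift that supremum to zero. Your subsequent claim $g(y)\leq -\gamma\|y\|$ on $\mathrm{bd}(\cK)$ is therefore unjustified, and everything downstream of it (the ``calibration'' and the concavity extension) is left hanging.

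The paper repairs this by never attempting to force $g\leq 0$ on $\mathrm{bd}(\cK)$. It sets $S:=\sup_{\cK_{\scriptscriptstyle\vee}} f\in[0,\infty)$ and takes $\delta_i:=\beta_i+S$, so that for $\alpha\in(\delta_1,\delta_2)$ one has $f(k)\leq S<\alpha<f(c)$ for all $k\in\cK_{\scriptscriptstyle\vee}$ and $c\in\cC_{\scriptscriptstyle\wedge}$; homogenising these strict inequalities on unit vectors gives both $(f,\alpha)\in\cC^{a\#}_+$ and $f(y)+\alpha\|y\|>0$ on $\mathrm{bd}(-\cK)\setminus\{\cero\}$, which is precisely $(ii)$. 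The paper then proves $(ii)\Rightarrow(iii)$ separately, and here your ``concavity'' remark is not enough: a strict inequality on $-\mathrm{co}(\cC)$ can degenerate to equality at a closure point. The paper's device is to pick $\hat\alpha\in(\alpha,\delta_2)$ and note that if $f(\bar x)+\alpha\|\bar x\|=0$ at some $\bar x\in -\overline{\mathrm{co}}(\cC)\setminus\{\cero\}$, then approximating $\bar x$ from $-\mathrm{co}(\cC)$ forces $f(\bar x)+\hat\alpha\|\bar x\|\leq 0$, while directly $f(\bar x)+\hat\alpha\|\bar x\|=(\hat\alpha-\alpha)\|\bar x\|>0$, a contradiction. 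Thus the paper runs the cycle as $(i)\Rightarrow(ii)\Rightarrow(iii)\Rightarrow(i)$, with the open interval of $\alpha$'s doing real work at two separate steps rather than being absorbed into a single rescaling.
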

\begin{proof}
(i)$\Rightarrow$(ii) Since $\cero\not \in \overline{{\cC}_{\scriptscriptstyle \wedge}-{\cK_{\scriptscriptstyle \vee}}}$, by \cite[Theorem 2.12]{Montensinos2ndbook}, there exist $f \in X^* $ and $\beta_1, \beta_2 \in \mathbb{R}$ such that $f(0)=0 < \beta_1 < \beta_2 <f(c-k)=f(c)-f(k)$ for every $c\in {\cC}_{\scriptscriptstyle \wedge}$, $k\in \cK_{\scriptscriptstyle \vee}$. 
Then $f(k) < \beta_1 + f(k) < \beta_2 + f(k)<f(c)$,  for every $c\in {\cC}_{\scriptscriptstyle \wedge}$, $k\in\cK_{\scriptscriptstyle \vee}$. 
Denote $S:=  \sup_{\cK_{\scriptscriptstyle \vee}} f  $. Then $0\leq S  < + \infty$ because $\cero \in \cK_{\scriptscriptstyle \vee}$ and the set $\cK_{\scriptscriptstyle \vee}$ is bounded. 
Furthermore,  for every $c\in {\cC}_{\scriptscriptstyle \wedge}$, $k\in \cK_{\scriptscriptstyle \vee}$ we have $f(k)\leq S< \beta_1 +S < \beta_2 + S\leq f(c)$. Therefore, denoting $\delta_1 :=\beta_1 +S$ and $\delta_2 :=\beta_2 +S$ we get that $ f(k)< \alpha< f(c)$ for every $c\in {\cC}_{\scriptscriptstyle \wedge}$, $k\in \cK_{\scriptscriptstyle \vee}$ and $\alpha \in (\delta_1, \delta_2)$.  On the other hand, since $\cero \in \cK_{\scriptscriptstyle \vee}$, it follows that $f(c)>0$ for every $c\in {\cC}_{\scriptscriptstyle \wedge}$. Thus $f\in C^{\#}$. 
Now, fix arbitrary $\alpha \in (\delta_1, \delta_2) $ and $y \in \mbox{bd}(\cK) \cap S_X $. As $f(y)<\alpha $ and  $\Vert y \Vert=1$, we get that  $f(y)-\alpha\Vert y \Vert<0 $, which leads to  $f(-y)+\alpha\Vert y \Vert>0$. 
Therefore  $f(y)+\alpha\Vert y \Vert>0$ for every  $y \in -\mbox{bd}(\cK)\setminus \{\cero \}$.  Fix now  $\alpha \in (\delta_1, \delta_2)$ and $x \in (-\cC)\cap S_X $. Again, $\cero \in \cK_{\scriptscriptstyle \vee}$ implies $f(\cero)=0<\alpha <f(-x) $. Since $\Vert -x \Vert=1$, we get that  $0<f(-x)-\alpha\Vert -x \Vert $, which leads to  $f(x)+\alpha\Vert x \Vert<0$. Consequently, $f(x)+\alpha\Vert x \Vert<0$ for every   $x \in -\cC$, $x\not =\cero$. Clearly $(f,\alpha)\in \cC^{a\#}_+$.

(ii)$\Rightarrow$(iii) Consider $(f,\alpha)\in \cC^{a\#}_+$ for some $\alpha \in (\delta_1,\delta_2)$.  Then $f(x)+\alpha \normx<0$ for every $x \in -\cC$,  $x\not =\cero$.  Now,  sublinearity of $f(\cdot)+\alpha \| \cdot \|$ yields  $f(x)+\alpha \| x \|< 0$ for every $x \in -\mbox{co}(\cC )$,  $x\not =\cero$.  Next, we will prove the precedent inequality for  $x \in -\overline{\mbox{co}}(\cC )$, $x\not = \cero$.  Assume, contrary to our claim, that there exist some   $\bar{x} \in - (\overline{\mbox{co}}(\cC )\setminus \mbox{co}(\cC))$, $\bar{x}\not =\cero$, such that $f(\bar{x})+\alpha \| \bar{x} \|= 0$. Then, there exists a sequence $\{x'_n\} \subset -\mbox{co}(\cC) $ such that $\underset{n\rightarrow \infty}{\lim}x'_n=\bar{x}$. Now, we fix some $\hat{\alpha} \in (\delta_1,\delta_2)$ such that $\alpha<\hat{\alpha}$. 
Then,
$\underset{n\rightarrow \infty}{\lim}f(x'_n)+ \hat{\alpha} \|x'_n\|= \underset{n\rightarrow \infty}{\lim}  f(x'_n)+ \alpha \|x'_n\| +  \underset{n\rightarrow \infty}{\lim} (\hat{\alpha}-\alpha) \| x'_n \|= (\hat{\alpha}-\alpha) \| \bar{x}\| >0$,
because $\bar{x} \neq \cero$. Hence, there exists some $n_0 \in \mathbb{N}$ such that $f(x'_{n_0})+ \alpha \|x'_{n_0}\|>0$,  which is impossible because  $x'_{n_0}\in - \mbox{co}(\cC)$, $x'_{n_0}\not =\cero$.

(iii)$\Rightarrow$(i) 
For any $x\in (-\cC)\cap S_X$, $y \in (-\mbox{bd}(\cK))\cap S_X$, $y\not =\cero$, and $\alpha \in (\delta_1,\delta_2)$ we have $f(x)<-\alpha< f(y)$. The former inequalities also hold for $x \in \mbox{co}((-\cC)\cap S_X)$ and $y \in \mbox{co}(-(\mbox{bd}(\cK)\cap S_X)_0)$.   Indeed, fix $\alpha \in (\delta_1,\delta_2)$ and $x \in  -\cC_{\scriptscriptstyle \wedge}=\mbox{co}((-\cC)\cap S_X)$. 
 Then  $x=\sum_{i=1}^{n}\lambda_i  x_i $ for $\lambda_i \geq 0$, $\sum_{i=1}^{n}\lambda_i=1$, $x_i\in (-\cC)\cap S_X$, and $f(x_i)<-\alpha$. Therefore, $f(x)=\sum_{i=1}^{n}\lambda_i  f(x_i) <-\alpha$.  For the same reason, $ -\alpha <f(y)$ for every $y \in -\cK_{\scriptscriptstyle \vee}=\mbox{co}(-(\mbox{bd}(\cK)\cap S_X)_0)$. This implies that $f(y)-f(x)>\frac{\delta_2-\delta_1}{2}$ for every $x \in -\cC_{\scriptscriptstyle \wedge}$ and $y \in -\cK_{\scriptscriptstyle \vee}$.  To obtain a contradiction,  suppose that $(\cC,\cK)$ does not have SSP. Then $(-\cC,-\cK)$ does not have SSP either, i.e.,  $\cero  \in \overline{-\cC_{\scriptscriptstyle \wedge}-(-\cK_{\scriptscriptstyle \vee})}$.  Then, there exist two sequences $(x_n)_n \subset -\cC_{\scriptscriptstyle \wedge}$ and $(y_n)_n \subset -\cK_{\scriptscriptstyle \vee}$ such that $0=\lim_n \| x_n-y_n \| $.  
Then, by continuity of $f$, we have $\lim_n f(y_n-x_n)=0$. 
As a consequence, there exists $n_0 \in \mathbb{N}$ such that $\mid  f(y_n)-f(x_n) \mid  < \frac{\delta_2-\delta_1}{2} $ for every $n > n_0$,  a contradiction.

\end{proof}

Next, a direct consequence of the precedent result.
\begin{corollary}\label{coro_clcoC}
Let $X$ be a normed space and $\cC$, $\cK$ cones on $X$. Then $(\cC, \cK)$ has SSP if and only if $(\overline{co}(\cC), \cK)$ has SSP.
\end{corollary}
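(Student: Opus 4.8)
The plan is to read this off from Theorem~\ref{thm:primer_tma_separacion_C_y_K}, whose condition (iii) already features $\overline{\mbox{co}}(\cC)$ rather than $\cC$. First I would record the elementary facts that the closed convex hull of a cone is again a cone and that $\overline{\mbox{co}}(\overline{\mbox{co}}(\cC))=\overline{\mbox{co}}(\cC)$. One implication is then cheap: from $\cC\cap S_X\subset\overline{\mbox{co}}(\cC)\cap S_X$ we get $\cC_{\scriptscriptstyle \wedge}\subset(\overline{\mbox{co}}(\cC))_{\scriptscriptstyle \wedge}$, hence $\overline{\cC_{\scriptscriptstyle \wedge}-\cK_{\scriptscriptstyle \vee}}\subset\overline{(\overline{\mbox{co}}(\cC))_{\scriptscriptstyle \wedge}-\cK_{\scriptscriptstyle \vee}}$, so if $(\overline{\mbox{co}}(\cC),\cK)$ has SSP then so does $(\cC,\cK)$. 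The reverse implication is the substantive one, and rather than argue it by hand it is cleanest to obtain both implications simultaneously through Theorem~\ref{thm:primer_tma_separacion_C_y_K}.

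The core step is to observe that condition (iii) of Theorem~\ref{thm:primer_tma_separacion_C_y_K} for the pair $(\cC,\cK)$ and for the pair $(\overline{\mbox{co}}(\cC),\cK)$ is literally the same statement. Written out for a cone $\cD$, the clause about $\mbox{bd}(-\cK)$ does not involve $\cD$, and the clause about $\cD$ reads ``$f(x)+\alpha\|x\|<0$ for all $x\in-\overline{\mbox{co}}(\cD)$, $x\neq\cero$''; since $\overline{\mbox{co}}(\overline{\mbox{co}}(\cC))=\overline{\mbox{co}}(\cC)$, this clause is verbatim identical for $\cD=\cC$ and for $\cD=\overline{\mbox{co}}(\cC)$. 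The only remaining point is that the membership requirements ``$(f,\alpha)\in\cC^{a\#}_+$'' and ``$(f,\alpha)\in(\overline{\mbox{co}}(\cC))^{a\#}_+$'' are both automatically implied by this common inequality clause: substituting $x=-z$, the inequality says $f(z)>\alpha\|z\|$ for all $z\in\overline{\mbox{co}}(\cC)\setminus\{\cero\}$, which together with $\alpha\in(\delta_1,\delta_2)\subset\R_{++}$ forces $f(z)>0$ there, i.e.\ $f\in(\overline{\mbox{co}}(\cC))^{\#}$ and $(f,\alpha)\in(\overline{\mbox{co}}(\cC))^{a\#}_+$; and a fortiori, since $\cC\subset\overline{\mbox{co}}(\cC)$, $(f,\alpha)\in\cC^{a\#}_+$. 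Hence (iii) for $(\cC,\cK)$ holds if and only if (iii) for $(\overline{\mbox{co}}(\cC),\cK)$ holds.

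To finish I would chain the equivalences: $(\cC,\cK)$ has SSP $\Leftrightarrow$ (iii) for $(\cC,\cK)$ $\Leftrightarrow$ (iii) for $(\overline{\mbox{co}}(\cC),\cK)$ $\Leftrightarrow$ $(\overline{\mbox{co}}(\cC),\cK)$ has SSP, the outer two equivalences being Theorem~\ref{thm:primer_tma_separacion_C_y_K}. The only case in which the theorem is not directly applicable to the second pair is $\overline{\mbox{co}}(\cC)=X$; but then $(\overline{\mbox{co}}(\cC),\cK)$ fails SSP because $\cero\in(\overline{\mbox{co}}(\cC))_{\scriptscriptstyle \wedge}=\mbox{co}(S_X)$ while $\cero\in\cK_{\scriptscriptstyle \vee}$, and $(\cC,\cK)$ also fails SSP, since by Theorem~\ref{thm:primer_tma_separacion_C_y_K}(iii) it would yield a functional strictly positive on $\overline{\mbox{co}}(\cC)\setminus\{\cero\}=X\setminus\{\cero\}$, which is impossible; so the asserted equivalence holds trivially in that degenerate case.

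I do not anticipate a genuine obstacle here. The one place that needs care is exactly the transfer of the augmented-dual-cone membership appearing in condition (iii) when one enlarges $\cC$ to $\overline{\mbox{co}}(\cC)$, and this is handled by the sign-flip substitution $x=-z$ above. (A purely ``primal'' proof is conceivable but seems to require reconstructing the separating functional anyway, so routing through Theorem~\ref{thm:primer_tma_separacion_C_y_K} is the efficient route, consistent with the corollary being billed as a direct consequence of that theorem.)
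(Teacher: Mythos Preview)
Your proposal is correct and matches the paper's intent: the corollary is stated there as ``a direct consequence of the precedent result'' with no further argument, and your route through condition~(iii) of Theorem~\ref{thm:primer_tma_separacion_C_y_K}---exploiting that the inequality clause already involves $\overline{\mbox{co}}(\cC)$ and is idempotent under taking closed convex hulls---is exactly the intended one. Your additional care with the transfer of the augmented-dual-cone membership and the degenerate case $\overline{\mbox{co}}(\cC)=X$ goes beyond what the paper spells out but is entirely appropriate.
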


Theorem \ref{thm:primer_tma_separacion_C_y_K} establishes that the results in \cite{Kasimbeyli2010} obtained for reflexive Banach spaces can be extended to general normed spaces under SSP. On the other hand, the following result is our main separation theorem and it will provide optimal conditions for the proper minimal points introduced in Section \ref{sec:notation} and for the approximate proper efficient points in Section \ref{sec:approximate_proper}.

\begin{theorem} \label{thm:separacion_para_aplicar_Q_minimales}
Let $X$ be a normed space and $\cC$, $\cK$ cones on $X$ such that  $\overline{co}(\cC) \cap \cK \not =\{\cero \}$. If $(\cC,\cK)$ has SSP, then $\mbox{int}(\cK) \neq \emptyset$ and there exist $\delta_2>\delta_1>0$ and $f \in X^*$ such that 
 $(f,\alpha) \in \cC^{a\#}_+$ and  $f(x)+ \alpha\Vert x \Vert <0<f(y)+\alpha \Vert y \Vert$, for every $\alpha \in (\delta_1,\delta_2)$, $ x\in -\overline{co}(\cC)$, $x\not =\cero$,  and $y\in X \setminus \mbox{int}(-\cK)$, $y\not =\cero$.
\end{theorem}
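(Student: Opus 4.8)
The plan is to extract everything except the strengthened right‑hand inequality directly from Theorem~\ref{thm:primer_tma_separacion_C_y_K}, and then to upgrade the boundary inequality to all of $X\setminus\mbox{int}(-\cK)$ by a short connectedness argument. Since $(\cC,\cK)$ has SSP, the implication (i)$\Rightarrow$(iii) of Theorem~\ref{thm:primer_tma_separacion_C_y_K} supplies $\delta_2>\delta_1>0$ and $f\in X^*$ with $(f,\alpha)\in\cC^{a\#}_+$, $f(x)+\alpha\|x\|<0$ for every $x\in-\overline{co}(\cC)\setminus\{\cero\}$, and $f(y)+\alpha\|y\|>0$ for every $y\in\mbox{bd}(-\cK)\setminus\{\cero\}$, uniformly in $\alpha\in(\delta_1,\delta_2)$. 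I would fix such an $f$ and an arbitrary $\alpha\in(\delta_1,\delta_2)$, and then use the hypothesis $\overline{co}(\cC)\cap\cK\neq\{\cero\}$ to pick $z\in(\overline{co}(\cC)\cap\cK)\setminus\{\cero\}$; since $-z\in-\overline{co}(\cC)\setminus\{\cero\}$, the left inequality gives $f(z)>\alpha\|z\|$.

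The next step is to prove $z\in\mbox{int}(\cK)$, which yields at once $\mbox{int}(\cK)\neq\emptyset$ and the key fact $-z\in\mbox{int}(-\cK)$. If, on the contrary, $z\in\mbox{bd}(\cK)$, then — using that $\cK$ is a non‑trivial cone and that positive homotheties are homeomorphisms of $X$ mapping $\cK$ onto $\cK$, so $\mbox{bd}(\cK)$ is invariant under positive scaling — we get $z/\|z\|\in\mbox{bd}(\cK)\cap S_X$ and hence $-z/\|z\|\in\mbox{bd}(-\cK)\cap S_X$; feeding this into the boundary inequality gives $-f(z)/\|z\|+\alpha>0$, i.e.\ $f(z)<\alpha\|z\|$, contradicting the previous paragraph.

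Finally I would consider $M:=\{y\in X: f(y)+\alpha\|y\|\le 0\}$, a closed convex set containing $\cero$. Every $y\in M\setminus\{\cero\}$ satisfies $f(y)\le-\alpha\|y\|<0$, and from this one checks that $M\setminus\{\cero\}$ is itself convex (a segment joining two of its points stays in $M$, and if it passed through $\cero$ one endpoint would be a negative multiple of the other, forcing $f>0$ there); in particular $M\setminus\{\cero\}$ is connected. The boundary inequality shows $M\setminus\{\cero\}$ is disjoint from $\mbox{bd}(-\cK)$, so it is a connected subset of $X\setminus\mbox{bd}(-\cK)=\mbox{int}(-\cK)\sqcup(X\setminus\overline{-\cK})$, hence contained in exactly one of these two disjoint open pieces. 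Because $-z\in M\setminus\{\cero\}$ and $-z\in\mbox{int}(-\cK)$, that piece must be $\mbox{int}(-\cK)$; thus $M\setminus\{\cero\}\subset\mbox{int}(-\cK)$. Equivalently, every $y\neq\cero$ with $y\notin\mbox{int}(-\cK)$ lies outside $M$, i.e.\ $f(y)+\alpha\|y\|>0$. Since $\alpha\in(\delta_1,\delta_2)$ was arbitrary, combining this with the left inequality from the first paragraph gives precisely the asserted conclusion.

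I expect the main obstacle to be the topological step in the last paragraph: one must notice that $M\setminus\{\cero\}$ is convex (hence connected) exactly because $f$ is strictly negative on it, and then that a connected set disjoint from $\mbox{bd}(-\cK)$ cannot meet both the interior and the exterior of $-\cK$; the point $z$ coming from $\overline{co}(\cC)\cap\cK\neq\{\cero\}$ is what forces the relevant component to be $\mbox{int}(-\cK)$ and, en route, gives $\mbox{int}(\cK)\neq\emptyset$. A secondary point requiring care is that $\cK$ is only assumed to be a cone, so the scale‑invariance of $\mbox{bd}(\cK)$ must be justified directly rather than via convexity or closedness; everything else is routine manipulation of the augmented dual‑cone inequalities.
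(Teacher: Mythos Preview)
Your proof is correct and follows essentially the same strategy as the paper: invoke Theorem~\ref{thm:primer_tma_separacion_C_y_K}, use the hypothesis $\overline{\mbox{co}}(\cC)\cap\cK\neq\{\cero\}$ to produce an anchor point in $\mbox{int}(-\cK)$ on which $f(\cdot)+\alpha\|\cdot\|$ is negative, and then exploit sublinearity to rule out any nonzero $y\notin\mbox{int}(-\cK)$ with $f(y)+\alpha\|y\|\le 0$. The only cosmetic difference is that the paper carries out the last step directly---assuming such a $\bar y$ exists, the segment from the anchor $\bar x\in\mbox{int}(-\cK)$ to $\bar y\in X\setminus(-\overline{\cK})$ meets $\mbox{bd}(-\cK)$ at a point where sublinearity forces the functional to be nonpositive, a contradiction---whereas you recast this as a connectedness argument for the convex set $M\setminus\{\cero\}$; the content is the same.
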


\begin{proof}
Assume that $(\cC, \,\cK)$ has SSP. By Theorem~\ref{thm:primer_tma_separacion_C_y_K}, there exist $\delta_2>\delta_1>0$ and $f \in X^*$ such that  $(f,\alpha) \in \cC^{a\#}_+$ and $f(x)+ \alpha\Vert x \Vert <0<f(y)+\alpha \Vert y \Vert$, for every $\alpha \in (\delta_1,\delta_2)$, $x\in -\overline{\mbox{co}}(\cC)$, $x\not = \cero$,  and $y\in \mbox{bd}(-\cK)$, $y\not = \cero $. First, we check that $\mbox{int}(\cK) \neq \emptyset$.  Fix an arbitrary $\bar{x}\in (-{\overline{\mbox{co}}(\cC)}) \cap (-\cK)$ such that $\bar{x}\not = \cero$. Then $f(\bar{x})+\alpha \Vert \bar{x} \Vert<0$. On the other hand, since $ \bar{x} \in \mbox{bd}(-\cK)$ implies $f(\bar{x})+\alpha \Vert \bar{x} \Vert>0$, we have $ \bar{x} \not\in \mbox{bd}(-\cK) $, so $\bar{x} \in \mbox{int}(-\cK)$. Thus $\mbox{int}(\cK) \neq \emptyset$. Next, we will prove that $ 0< f(y)+ \alpha \Vert y \Vert$ for every $y \in X \setminus \mbox{int}(-\cK)$, $y\not = \cero$. Assume, contrary to our claim, that there exists $\bar{y} \in  X \setminus \mbox{int}(-\cK)$ such that $\bar{y}\not =\cero$ and satisfying $f(\bar{y})+ \alpha \Vert \bar{y} \Vert \leq 0 $. It is clear that $\bar{y} \not \in \mbox{bd}(-\cK)$. Hence $\bar{y} \in X \setminus (-\overline{\cK})$. We consider again the former $\bar{x} \in \mbox{int}(-\cK)$ and there exists $\lambda_0 \in  (0,1)$ such that $x_0=\lambda_0 \bar{x}+ (1-\lambda_0) \bar{y}\in \mbox{bd}(-\cK)$.
As a consequence, $f(x_0)+\alpha \| x_0 \| = f( \lambda_0 \bar{x}+ (1-\lambda_0) \bar{y}) + \alpha \Vert \lambda_0 \bar{x}+ (1-\lambda_0) \bar{y} \Vert \leq
\lambda_0 (f(\bar{x})+ \alpha \Vert \bar{x} \Vert) + (1- \lambda_0)(f(\bar{y})+ \alpha \Vert \bar{y} \Vert)<0$, which contradicts $x_0 \in \mbox{bd}(-\cK)$.
\end{proof}


The following result shows the relative position of a pair of cones  having SSP.

\begin{corollary}\label{coro:pos_relativa_conos_SSP}
Let $X$ be a normed space and $\cC$, $\cK$ cones on $X$. If $\cC$ and $\cK$ have SSP,  then either $\overline{co}(\cC) \setminus \{\cero\} \subset \mbox{int}(\cK)$ or $\overline{co}(\cC) \setminus \{\cero\} \subset \mbox{int}(X \setminus \cK)$.
\end{corollary}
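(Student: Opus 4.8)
The plan is to argue by a dichotomy on whether $\overline{co}(\cC)\cap\cK$ is trivial, feeding each case into the separation Theorem~\ref{thm:separacion_para_aplicar_Q_minimales}. Write $\cD:=\overline{co}(\cC)$, a closed convex cone. The common mechanism I would isolate is this: whenever one can produce $\delta_2>\delta_1>0$ and $f\in X^*$ with $(f,\alpha)\in\cC^{a\#}_{+}$ and $f(x)+\alpha\|x\|<0<f(y)+\alpha\|y\|$ for a fixed $\alpha\in(\delta_1,\delta_2)$, all $x\in-\cD\setminus\{\cero\}$, and all $y\in X\setminus\mbox{int}(-\cL)$, $y\neq\cero$ (for some non-trivial cone $\cL$), then the open set $U:=\{v\in X:f(v)+\alpha\|v\|<0\}$ satisfies $-\cD\setminus\{\cero\}\subseteq U$ and, since $\cero\notin U$, also $U\cap\bigl(X\setminus\mbox{int}(-\cL)\bigr)=\emptyset$, hence $U\subseteq\mbox{int}(-\cL)$; combining gives $\cD\setminus\{\cero\}\subseteq\mbox{int}(\cL)$.

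First suppose $\cD\cap\cK\neq\{\cero\}$. Then Theorem~\ref{thm:separacion_para_aplicar_Q_minimales} applies verbatim to the pair $(\cC,\cK)$ (its hypothesis $\overline{co}(\cC)\cap\cK\neq\{\cero\}$ is exactly our assumption), and the mechanism above with $\cL=\cK$ yields $\cD\setminus\{\cero\}\subseteq\mbox{int}(\cK)$, the first alternative. Now suppose $\cD\cap\cK=\{\cero\}$. Here Theorem~\ref{thm:separacion_para_aplicar_Q_minimales} is not applicable to $(\cC,\cK)$, so I would pass to the complementary cone $\cK':=(X\setminus\cK)_0$, which is again a non-trivial cone and for which $(\cC,\cK')$ has SSP by Remark~\ref{remSSPcomplementario}(ii). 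Because the present case forces every nonzero point of $\cD$ outside $\cK$, hence into $\cK'$, we now have $\cD\cap\cK'=\cD\neq\{\cero\}$; so Theorem~\ref{thm:separacion_para_aplicar_Q_minimales} applies to $(\cC,\cK')$ and the same mechanism with $\cL=\cK'$ gives $\cD\setminus\{\cero\}\subseteq\mbox{int}(\cK')$. Finally, since a non-trivial cone $\cK$ has nonzero points in every ball around $\cero$, the origin is not interior to $\cK'=(X\setminus\cK)\cup\{\cero\}$, so $\mbox{int}(\cK')$ equals $\mbox{int}(X\setminus\cK)=X\setminus\overline{\cK}$; therefore $\cD\setminus\{\cero\}\subseteq\mbox{int}(X\setminus\cK)$, the second alternative.

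The step I expect to be the real obstacle is the second case. The tempting route is to try to upgrade $\overline{co}(\cC)\cap\cK=\{\cero\}$ to $\overline{co}(\cC)\cap\overline{\cK}=\{\cero\}$ directly, but for a $\cK$ that is neither closed nor convex this is not transparent, since interior points of $\overline{\cK}$ need not lie in $\cK$. Exploiting instead the complementation symmetry of SSP in Remark~\ref{remSSPcomplementario}(ii) converts the second case into an instance of the first; the only remaining care is the elementary identity $\mbox{int}((X\setminus\cK)_0)=\mbox{int}(X\setminus\cK)=X\setminus\overline{\cK}$ for non-trivial cones, which is precisely what lets us read off the conclusion in the stated form.
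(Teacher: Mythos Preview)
Your argument is correct, and it differs from the paper's in an interesting way. Both proofs ultimately rest on Theorem~\ref{thm:separacion_para_aplicar_Q_minimales}, but the paper does not split into the cases $\overline{\mbox{co}}(\cC)\cap\cK\neq\{\cero\}$ versus $=\{\cero\}$. Instead it first invokes Corollary~\ref{coro_clcoC} and Theorem~\ref{thm:primer_tma_separacion_C_y_K} to show that $(\overline{\mbox{co}}(\cC)\setminus\{\cero\})\cap\mbox{bd}(\cK)=\emptyset$, and then argues by contradiction: if the conclusion fails, there exist $c_1\in\mbox{int}(\cK)$ and $c_2\in\mbox{int}(X\setminus\cK)$ both in $\overline{\mbox{co}}(\cC)\setminus\{\cero\}$; the point $c_1$ supplies the hypothesis of Theorem~\ref{thm:separacion_para_aplicar_Q_minimales}, and the resulting inequalities applied to $-c_2$ (which lies simultaneously in $-\overline{\mbox{co}}(\cC)\setminus\{\cero\}$ and in $X\setminus\mbox{int}(-\cK)$) give the contradiction $f(-c_2)+\alpha\|c_2\|<0<f(-c_2)+\alpha\|c_2\|$.

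Your route trades the preliminary appeal to Theorem~\ref{thm:primer_tma_separacion_C_y_K} for the complementation symmetry of Remark~\ref{remSSPcomplementario}(ii), which lets you treat both cases uniformly as instances of Theorem~\ref{thm:separacion_para_aplicar_Q_minimales}. This is slightly more economical (one separation theorem instead of two) and constructive rather than by contradiction; the small price is the extra bookkeeping needed to verify $\mbox{int}((X\setminus\cK)_0)=\mbox{int}(X\setminus\cK)$, which you handle correctly. The paper's approach, on the other hand, makes the geometric picture---that $\overline{\mbox{co}}(\cC)\setminus\{\cero\}$ never touches $\mbox{bd}(\cK)$---explicit up front.
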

\begin{proof}
If $(\cC,\cK)$ has SSP, by Corollary \ref{coro_clcoC} we get that $(\overline{\mbox{co}}(\cC),\cK)$ has SSP,  and  then $(-\overline{\mbox{co}}(\cC),-\cK)$ has SSP, too. By Theorem \ref{thm:primer_tma_separacion_C_y_K},  we have $(-\overline{\mbox{co}}(\cC) \setminus \{ \cero \}) \cap \mbox{bd}(-\cK)=\emptyset$,  hence $(\overline{\mbox{co}}(\cC) \setminus \{ \cero \}) \cap \mbox{bd}(\cK)=\emptyset$.  Now, suppose that the assertion of the corollary is false. Then, there exist  $c_1$, $c_2 \in \overline{\mbox{co}}(\cC) \setminus \{\cero \}$ such that  $c_1 \in \mbox{int}( \cK) $  and   $c_2 \in \mbox{int}(X \setminus \cK)$. As $c_1 \in \overline{\mbox{co}}(\cC) \cap \cK$, Theorem \ref{thm:separacion_para_aplicar_Q_minimales} applies. 
 Hence there exists $(f,\alpha) \in \cC_+^{a\#} $ such that $f(x)+ \alpha\Vert x \Vert <0<f(y)+\alpha \Vert y \Vert $, for every $ x\in -\overline{\mbox{co}}(\cC)$, $x\not =\cero$, and  $ y\in X \setminus \mbox{int}(-\cK)$, $y\not =\cero$. Now, on one hand,  $f(-c_2)+ \alpha\Vert -c_2 \Vert <0$ because $c_2 \in \overline{\mbox{co}}(\cC)$.  But on the other hand, since $c_2 \in \mbox{int}(X \setminus \cK)\subset X \setminus \cK \subset X \setminus \mbox{int}(\cK)$, it follows that $0<f(-c_2)+ \alpha\Vert -c_2 \Vert $, a contradiction.
\end{proof}

\color{black}
We next introduce the notion of $\cQ$-minimal point (from \cite{Ha2010,Zalinescu2015}) to derive   scalarizations  for  proper efficient points in a unified way. 

\begin{definition}\label{defi:Q_minimal_point}
Let $X$ be a normed space, $\cQ \subset  X$ an open cone, and $A \subset X$ a subset. We say that $x_0\in A$ is a  $\cQ$-minimal point of $A$, $x_0\in \cQ\mbox{Min}(A)$, if $(A-x_0 ) \cap (-\cQ) =\emptyset$.
\end{definition}
The following notion is directly related to $\cQ$-minimal points.
\begin{definition}\label{defi:dilation_cone}
Let $X$ be a partially ordered normed space, $\cC\subset X$ the ordering cone, and $\cQ\subset X$ an open cone. We say that $\cQ$ dilates $\cC$ (or $\cQ$ is a dilation of $\cC$) if $\cC\setminus\{\cero\}\subset \cQ$.
\end{definition}
The following result shows that the proper efficient points introduced in Definition~\ref{defi:proper_minimal_point} are $\cQ$-minimal points with $\cQ$ being appropiately chosen cones.  Assertions (i)-(viii)  below are Assertions (iii)-(x) from \cite[Theorem 21.7]{Ha2010}, respectively, under minor adaptations (see also \cite[Theorem~2.4.11]{Zalinescu2015}). On the other hand, assertion (ix) below can be proved in a similar way as (ii) and (iii) in \cite[Theorem 21.7]{Ha2010}, so we omit the proof.  We need more terminology. Let $X$ be a partially ordered normed space,  $\cC\subset X$ the ordering cone,  and $B$ a base of $\cC$. Let $\delta_B:=\inf\{\Vert b \Vert \colon b \in B\}>0$, for every $0<\eta<\delta_B$ we define a convex, pointed and open cone $V_{\eta}(B):=\mbox{cone}(B+\eta B^{\circ}_X)$. On the other hand, given $0<\epsilon<1$ we define another open cone $\cC(\epsilon):=\{x \in X \colon d(x,\cC)<\epsilon d(x,-\cC)\}$. 
 
 
\begin{theorem} \label{thm:Idenfi_proper_con_QMIn} 
Let $X$ be a partially ordered normed space, $\cC$ the ordering cone, $A\subset X$ a subset, and $x_0 \in A$. The following statements hold.
\begin{itemize}
\item[(i)] $x_0 \in  \mbox{Pos}(A,\cC)$ if and only if  $x_0\in \cQ\mbox{Min}(A)$ for $\cQ=\{x \in X \colon f(x)>0\}$ and some $f \in \cC^{\#}$.
\item[(ii)] $x_0 \in  \mbox{Hu}(A,\cC)$ if and only if  $x_0\in \cQ\mbox{Min}(A) $ for $\cQ= -X \setminus \overline{\mbox{co}}(\cK)$  and $\cK:=\mbox{cone}((A-x_0) \cup \cC)$.
\item[(iii)] $x_0 \in  \mbox{Be}(A,\cC)$ if and only if $x_0 \in \cQ\mbox{Min}(A) $ for $\cQ= -X \setminus \overline{\mbox{cone}}(A-x_0+ \cC)$.
\item[(iv)] $x_0 \in  \mbox{Ha}(\cA,\cC)$ if and only if $x_0\in Q\mbox{Min}(A) $ for $\cQ=\cC(\epsilon)$ and $\epsilon >0$.
\item[(v)] $x_0 \in  \mbox{Bo}(\cA,\cC)$ if and only if $x_0\in \cQ\mbox{Min}(A) $ for $Q$ an open cone dilating $\cC$.
\item[(vi)] $x_0  \in  \mbox{GHe}(\cA,\cC)$ if and only if $x_0\in \cQ\mbox{Min}(A)$ for $Q$ a  pointed convex open cone dilating $\cC$.
\item[(vii)] $x_0  \in  \mbox{He}(\cA,\cC)$ if and only if $x_0\in \cQ\mbox{Min}(A) $ for $Q=V_{\eta}(B)$,  $0<\eta<\delta_B$, and a base $B$ of $\cC$.
\item[(viii)] $x_0  \in  \mbox{SE}(\cA,\cC)$ if and only if $x_0\in \cQ\mbox{Min}(A)$ for $Q=V_{\eta}(B)$, $0<\eta<\delta_B$, and $B$ a bounded base of $\cC$.
\item[(ix)] $x_0 \in  \mbox{TBo}(A,\cC)$ if and only if $x_0 \in \cQ\mbox{Min}(A) $ for $\cQ= -X \setminus T(A+ \cC,x_0)$.
\end{itemize}
\end{theorem}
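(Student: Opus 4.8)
The statement splits into the eight assertions (i)--(viii), which are merely a relabelling of \cite[Theorem~21.7(iii)--(x)]{Ha2010} (equivalently \cite[Theorem~2.4.11]{Zalinescu2015}), and the new assertion (ix). For (i)--(viii) the plan is to reproduce the arguments there, each being an unfolding of the pertinent definition, and three patterns suffice. The simplest, (i), is immediate: $x_0\in\mbox{Pos}(A,\cC)$ means some $f\in\cC^{\#}$ satisfies $f(a)\ge f(x_0)$ for all $a\in A$, i.e. $(A-x_0)\cap\{x:f(x)<0\}=\emptyset$, i.e. $x_0\in\cQ\mbox{Min}(A)$ with the open cone $\cQ=\{x:f(x)>0\}$ (which dilates $\cC$ precisely because $f\in\cC^{\#}$). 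The ``dilation'' cases (iv)--(viii) rest on one remark: if $x_0\in\cQ\mbox{Min}(A)$ for an open cone $\cQ$ then $A-x_0\subset X\setminus(-\cQ)$, and since $X\setminus(-\cQ)$ is a closed cone this gives $\overline{\mbox{cone}}(A-x_0)\cap(-\cQ)=\emptyset$, whence the relevant separation once $\cQ$ dilates $\cC$; conversely one exhibits the explicit dilating cone ($\cC(\epsilon)$, $V_{\eta}(B)$, or $-X\setminus\overline{\mbox{cone}}(A-x_0)$, according to the item), for which $x_0\in\cQ\mbox{Min}(A)$ holds by construction. The ``complement-cone'' cases (ii)--(iii) take $\cQ=-X\setminus D$, with $D$ the closed cone of the definition ($D=\overline{\mbox{co}}(\mbox{cone}((A-x_0)\cup\cC))$ for Hurwicz, $D=\overline{\mbox{cone}}(A+\cC-x_0)$ for Benson), and use that, $D$ containing $\cC$, the assertion ``$x_0\in\cQ\mbox{Min}(A)$ for this $\cQ$'' amounts exactly to ``$\cQ$ is a non-trivial open cone dilating $\cC$'', which in turn is equivalent to the separation $D\cap(-\cC)=\{\cero\}$.

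For (ix) the plan is to imitate the Hurwicz and Benson cases --- parts (ii) and (iii) of \cite[Theorem~21.7]{Ha2010} --- with $D:=T(A+\cC,x_0)$ playing the role of $\overline{\mbox{cone}}(A+\cC-x_0)$. Two structural facts make the substitution legitimate. First, $D$ is a nonempty closed cone, a standard property of the contingent cone (\cite{aubin1990set}). Second, $\cC\subset D$: for $c\in\cC$ the sequence $x_n:=x_0+c/n$ lies in $A+\cC$ (as $x_0\in A$ and $c/n\in\cC$), converges to $x_0$, and satisfies $n(x_n-x_0)=c$, so $c\in T(A+\cC,x_0)$; together with $\cero\in T(A+\cC,x_0)$ this yields $D_0=D$ and $\cC\setminus\{\cero\}\subset D$. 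One then forms $\cQ:=-X\setminus T(A+\cC,x_0)$ and runs the same chain of equivalences as in the Benson case,
\[ x_0\in\cQ\mbox{Min}(A)\ \Longleftrightarrow\ \cdots\ \Longleftrightarrow\ T(A+\cC,x_0)\cap(-\cC)=\{\cero\}\ \Longleftrightarrow\ x_0\in\mbox{TBo}(A,\cC), \]
the middle link being precisely that, $T(A+\cC,x_0)$ being a closed cone containing $\cC$, the set $\cQ$ is a non-trivial open cone dilating $\cC$ if and only if that separation holds.

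The only step that is not pure bookkeeping is this middle equivalence: keeping track simultaneously of the negation and complement defining $\cQ$, of the passage from $A$ to $A+\cC$ hidden inside $T(A+\cC,x_0)$, and of the closure built into the contingent cone, and checking that $\cQ$ is genuinely a non-trivial open cone in the borderline configurations (e.g. when $T(A+\cC,x_0)=X$). Since the corresponding argument for $\overline{\mbox{cone}}(A+\cC-x_0)$ in \cite[Theorem~21.7(ii)--(iii)]{Ha2010} uses nothing about that set beyond its being a closed cone containing $\cC$ --- the two properties we have just verified for $T(A+\cC,x_0)$ --- the proof transcribes line for line, which is why it may be omitted in favour of that reference.
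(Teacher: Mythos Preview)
Your proposal is correct and takes essentially the same approach as the paper: assertions (i)--(viii) are deferred to \cite[Theorem~21.7]{Ha2010} (your sketches simply unfold the definitions there), and for (ix) you verify that $T(A+\cC,x_0)$ is a closed cone containing $\cC$ so that the complement-cone argument transcribes verbatim --- which is precisely what the paper asserts (``assertion (ix) below can be proved in a similar way as (ii) and (iii) in \cite[Theorem~21.7]{Ha2010}, so we omit the proof''). One harmless slip: by the paper's own indexing, items (ii)--(iii) of \cite[Theorem~21.7]{Ha2010} are not the Hurwicz and Benson cases (those are (iv)--(v) there), but since your argument uses only the two structural facts about $T(A+\cC,x_0)$ and not any feature specific to Hurwicz or Benson, the misattribution does not affect the mathematics.
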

In \cite{Ha2010}, the author provides some necessary and sufficient conditions for a $\cQ$-minimal point to be a solution for a scalar optimization problem.  In the following result, we provide a necessary condition  in terms of SSP for $\cQ$-minimal points to be a solution of another scalar problem.
\begin{theorem} \label{thm:scalarization_Q_minimals}
Let $X$ be a partially ordered normed space, $\cC$ the ordering cone,  $\cQ\subset X$ an open cone,  and $x_0 \in A \subset X$.  Assume that $x_0\in \cQ\mbox{Min}(A)$ and $\cC\cap \cQ\not = \emptyset$.  
If  $(\cC, \,\cQ_0)$ has SSP,  then there exists $(f,\alpha) \in \cC^{a\#}_+$ such that \[\underset{x \in A}{min} \ f(x-x_0)+ \alpha\Vert x-x_0 \Vert\] is attained only at $x_0$.
\end{theorem}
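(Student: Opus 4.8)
The plan is to reduce the scalarization statement directly to the main separation theorem, Theorem~\ref{thm:separacion_para_aplicar_Q_minimales}, applied to the pair $(\cC,\cQ_0)$. First I would observe that the hypotheses of Theorem~\ref{thm:separacion_para_aplicar_Q_minimales} are met: we are assuming $(\cC,\cQ_0)$ has SSP, and since $\cero\in\overline{\mbox{co}}(\cC)\cap\cQ_0$ and moreover $\cC\cap\cQ\neq\emptyset$ gives a nonzero point in $\overline{\mbox{co}}(\cC)\cap\cQ_0$, we have $\overline{\mbox{co}}(\cC)\cap\cQ_0\neq\{\cero\}$. Hence Theorem~\ref{thm:separacion_para_aplicar_Q_minimales} yields $\delta_2>\delta_1>0$ and $f\in X^*$ with $(f,\alpha)\in\cC^{a\#}_+$ for every $\alpha\in(\delta_1,\delta_2)$, and
\[
f(x)+\alpha\|x\|<0<f(y)+\alpha\|y\|
\]
for every $\alpha\in(\delta_1,\delta_2)$, every $x\in-\overline{\mbox{co}}(\cC)$ with $x\neq\cero$, and every $y\in X\setminus\mbox{int}(-\cK)$ with $y\neq\cero$, where here $\cK=\cQ_0$ so that $\mbox{int}(-\cK)=-\cQ$ (since $\cQ$ is open). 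Fix one such $\alpha\in(\delta_1,\delta_2)$; this is the $(f,\alpha)\in\cC^{a\#}_+$ we will use.

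Next I would exploit the $\cQ$-minimality of $x_0$. By Definition~\ref{defi:Q_minimal_point}, $(A-x_0)\cap(-\cQ)=\emptyset$, so for every $x\in A$ the vector $y:=x-x_0$ lies in $X\setminus(-\cQ)=X\setminus\mbox{int}(-\cK)$. There are two cases. If $x\neq x_0$, then $y\neq\cero$, and the right-hand inequality above gives $f(x-x_0)+\alpha\|x-x_0\|>0$. If $x=x_0$, then $f(x-x_0)+\alpha\|x-x_0\|=f(\cero)+\alpha\cdot 0=0$. Therefore the function $g(x):=f(x-x_0)+\alpha\|x-x_0\|$ satisfies $g(x_0)=0$ and $g(x)>0$ for all $x\in A\setminus\{x_0\}$, which is exactly the assertion that $\min_{x\in A} g(x)$ is attained only at $x_0$.

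I do not anticipate a serious obstacle here; the argument is a clean application of the separation theorem, and the only points requiring care are bookkeeping ones. The first is checking that the hypothesis $\overline{\mbox{co}}(\cC)\cap\cK\neq\{\cero\}$ of Theorem~\ref{thm:separacion_para_aplicar_Q_minimales} really follows from $\cC\cap\cQ\neq\emptyset$ (it does: any point of $\cC\cap\cQ$ is a nonzero element of $\overline{\mbox{co}}(\cC)\cap\cQ_0$, because $\cero\notin\cQ$ forces such a point to be nonzero). The second is the identification $X\setminus\mbox{int}(-\cQ_0)=X\setminus(-\cQ)$, which holds because $\cQ$ is open and $\mbox{int}(\cQ_0)=\cQ$. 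The third is the trivial but necessary remark that $x=x_0$ yields value $0$, so that the minimum is $0$ and is attained \emph{only} at $x_0$; one should state this explicitly rather than leave it implicit. Finally, since $\alpha$ was chosen in the open interval $(\delta_1,\delta_2)$, the pair $(f,\alpha)$ genuinely belongs to $\cC^{a\#}_+$, as required by the statement.
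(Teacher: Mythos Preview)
Your proposal is correct and follows essentially the same route as the paper: apply Theorem~\ref{thm:separacion_para_aplicar_Q_minimales} to the pair $(\cC,\cQ_0)$, then use $(A-x_0)\cap(-\cQ)=\emptyset$ to place each $x-x_0$ with $x\neq x_0$ in $X\setminus\mbox{int}(-\cQ_0)=X\setminus(-\cQ)$ and conclude strict positivity of the sublinear functional there. Your write-up is in fact more careful than the paper's in explicitly verifying the hypothesis $\overline{\mbox{co}}(\cC)\cap\cQ_0\neq\{\cero\}$ and the identification $\mbox{int}(\cQ_0)=\cQ$; the paper simply assumes $x_0=\cero$ by translation and invokes the separation theorem directly.
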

\begin{proof}
It is not restrictive to assume that $x_0=0_X$ (by translation we obtain the general case). Since $(\cC,\,\cQ_0)$ has SSP, Theorem \ref{thm:separacion_para_aplicar_Q_minimales} applies and there exists $(f,\alpha) \in \cC^{a\#}_+$ such that $f(c)+ \alpha\Vert c \Vert <0<f(x)+\alpha \Vert x \Vert$ for every $ c\in -\cC \setminus \{\cero \}$ and  $ x\in X \setminus (-\cQ)$.  Since $A \subset X \setminus (-\cQ)$, it follows that  $0<f(x)+\alpha \Vert x \Vert$ for every $x \in A $, $x \neq 0_X$.
\end{proof}
In the following result, we establish a particular version of the former one for each type of proper efficient point introduced in Definition \ref{defi:proper_minimal_point}.
\begin{corollary} \label{coro:scalarization_proper_minimals}
Let $X$ be a partially ordered normed space, $\cC$ the ordering cone,  and $x_0 \in A\subset X$.  Assume that either of the following statements holds.
\begin{itemize}
\item[(i)]  $x_0 \in  \mbox{Pos}(A,\cC)$ and take $g \in \cC^{\#}$ such that $x_0\in \cQ\mbox{Min}(A,\cC)$ for $\cQ=\{x\in X \colon g(x)>0\}$.  
\item[(ii)]  $x_0 \in   \mbox{Hu}(A,\cC)$ and take $\cQ= -X \setminus \overline{\mbox{co}}(\cK)$  for $\cK:=\mbox{cone}((A-x_0) \cup \cC)$. 
\item[(iii)] $x_0 \in  \mbox{Be}(A,\cC)$ and take $\cQ= -X \setminus \overline{\mbox{cone}}(A-x_0+ \cC)$.
\item[(iv)]  $x_0 \in  \mbox{Ha}(\cA,\cC)$ and take $\cQ=C(\epsilon)$ for some $\epsilon >0$.
\item[(v)] $x_0 \in  \mbox{Bo}(\cA,\cC)$ and take  $Q$ an open cone dilating $\cC$.
\item[(vi)] $x_0  \in  \mbox{GHe}(\cA,\cC)$ and take $Q$ a  pointed convex open cone dilating $\cC$.
\item[(vii)] $x_0  \in  \mbox{He}(\cA,\cC)$ and take $Q=V_{\eta}(B)$,  $0<\eta<\delta_B$,  for some base $B$ of $\cC$.
\item[(viii)] $x_0  \in  \mbox{SE}(\cA,\cC)$ and take $Q=V_{\eta}(B)$, $0<\eta<\delta_B$,  for some bounded base $B$ of $\cC$.
\item[(ix)] $x_0 \in  \mbox{TBo}(A,\cC)$ and take $\cQ= -X \setminus T(A+ \cC,x_0)$.
\end{itemize}

If  $(\cC, \,\cQ_0)$ has SSP,  then there exists $(f,\alpha) \in \cC^{a\#}_+$ such that 
\begin{equation}\label{eq:scalarization_Q_proper_minimals}
\underset{x \in A}{min} \ f(x-x_0)+ \alpha\Vert x-x_0 \Vert
\end{equation}
 is attained only at $x_0$.
\end{corollary}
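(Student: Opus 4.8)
The plan is to obtain \eqref{eq:scalarization_Q_proper_minimals} in each of the nine cases by a single application of Theorem~\ref{thm:scalarization_Q_minimals}. Fix $x_0\in A$ and let $\cQ$ be the open cone attached to the case under consideration in (i)--(ix). The corollary already supplies the hypothesis that $(\cC,\cQ_0)$ has SSP, so to invoke Theorem~\ref{thm:scalarization_Q_minimals} it only remains to verify its two standing requirements: that $x_0\in\cQ\mbox{Min}(A)$ and that $\cC\cap\cQ\neq\emptyset$. Once both are in place, Theorem~\ref{thm:scalarization_Q_minimals} produces $(f,\alpha)\in\cC^{a\#}_+$ for which the minimum in \eqref{eq:scalarization_Q_proper_minimals} is attained only at $x_0$, which is exactly the claim. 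The first requirement is immediate: assertions (i)--(ix) of Theorem~\ref{thm:Idenfi_proper_con_QMIn} correspond one-to-one with the nine cases here, with the very same cone $\cQ$ in each line, so the assumed membership of $x_0$ in the relevant class of proper efficient points is, by that theorem, equivalent to $(A-x_0)\cap(-\cQ)=\emptyset$.

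The real content is the check $\cC\cap\cQ\neq\emptyset$, which I would organise into three groups. In cases (i) and (v)--(viii) the cone $\cQ$ contains $\cC\setminus\{\cero\}$: for (v) and (vi) this is the hypothesis that $\cQ$ dilates $\cC$; for (vii) and (viii) one has $\cC\setminus\{\cero\}\subset\mbox{cone}(B)\subset\mbox{cone}(B+\eta B^{\circ}_X)=V_{\eta}(B)$; and for (i), $g\in\cC^{\#}$ forces $g(c)>0$ for every $c\in\cC\setminus\{\cero\}$, so $\cC\setminus\{\cero\}\subset\{x\in X\colon g(x)>0\}=\cQ$. Since every cone in the paper is non-trivial, $\cC\setminus\{\cero\}\neq\emptyset$, hence $\cC\cap\cQ\neq\emptyset$. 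In cases (ii), (iii) and (ix) we have $\cQ=-(X\setminus D)$ with $D$ equal, respectively, to $\overline{\mbox{co}}(\cK)$, $\overline{\mbox{cone}}(A-x_0+\cC)$ and $T(A+\cC,x_0)$, so that $\cC\cap\cQ=\{c\in\cC\colon -c\notin D\}$; the defining property of each of these notions is precisely $D\cap(-\cC)=\{\cero\}$ (the Hurwicz condition $\overline{\mbox{co}}(\cK)\cap(-\cC)=\{\cero\}$, the Benson condition $(-\cC)\cap\overline{\mbox{cone}}(A-x_0+\cC)=\{\cero\}$, the tangentially Borwein condition $(-\cC)\cap T(A+\cC,x_0)=\{\cero\}$), whence every $c\in\cC\setminus\{\cero\}$ satisfies $-c\notin D$, so again $\cC\setminus\{\cero\}\subset\cQ$. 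Finally, in case (iv) with $\cQ=\cC(\epsilon)$, the inclusion $\cC\setminus\{\cero\}\subset\cC(\epsilon)$ amounts to $d(c,-\cC)>0$ for every $c\in\cC\setminus\{\cero\}$, which is part of the standing properties of the Hartley dilation $\cC(\epsilon)$ underlying Theorem~\ref{thm:Idenfi_proper_con_QMIn}(iv).

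I expect case (iv) to be the only place that is not a one-liner, precisely because $\cC(\epsilon)$ is the sole choice of $\cQ$ that is not visibly built out of $\cC$, $A$ and $x_0$, so its overlap with $\cC$ has to be drawn from the structural hypotheses attached to Hartley proper efficiency rather than from a definition already on the table; every other case reduces to the non-triviality of $\cC$ together with the relevant definition. With $x_0\in\cQ\mbox{Min}(A)$ and $\cC\cap\cQ\neq\emptyset$ confirmed, Theorem~\ref{thm:scalarization_Q_minimals} closes each case and yields \eqref{eq:scalarization_Q_proper_minimals}.
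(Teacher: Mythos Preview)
Your proof is correct and follows essentially the same approach as the paper: verify $\cC\setminus\{\cero\}\subset\cQ$ in each case, read off $x_0\in\cQ\mbox{Min}(A)$ from Theorem~\ref{thm:Idenfi_proper_con_QMIn}, and then invoke Theorem~\ref{thm:scalarization_Q_minimals}. The only cosmetic difference is organizational---the paper declares cases (i) and (iv)--(ix) trivial and spells out (ii) and (iii), whereas you group (ii), (iii), (ix) together and single out (iv) for extra caution; in fact the paper treats (iv) as immediate since $d(c,\cC)=0<\epsilon\, d(c,-\cC)$ for $c\in\cC\setminus\{\cero\}$, so your worry there is unnecessary.
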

\begin{proof}
We begin noting the inclusions $\cC\setminus \{\cero\}\subset \cQ$ for (i)-(ix).  The inclusion for the cases (i) and (iv)-(ix) is trivial.  For case (ii), we apply the following chain of equivalences: $\overline{\mbox{co}}(\cK)\cap (-\cC)=\{\cero\}\Leftrightarrow\overline{\mbox{co}}(\cK)\cap (-\cC\setminus \{\cero\})=\emptyset\Leftrightarrow -\cC\setminus \{\cero\}\subset X\setminus \overline{\mbox{co}}(\cK)=-\cQ$. 
For case (iii) we apply $\overline{\cK}\cap (-\cC)=\{\cero\}\Leftrightarrow\overline{\cK}\cap (-\cC\setminus \{\cero\})=\emptyset\Leftrightarrow -\cC\setminus \{\cero\}\subset X\setminus\overline{\cK}=\cQ.$

Now assume that either of (i)-(ix) holds.  By Theorem~\ref{thm:Idenfi_proper_con_QMIn},  $x_0\in \cQ\mbox{Min}(A,\cC)$ and by the former paragraph,  $\cC\cap \cQ\not=\emptyset$.  Then Theorem \ref{thm:scalarization_Q_minimals} applies.
\end{proof}

The following result characterizes Benson proper minimal points via (\ref{eq:scalarization_Q_proper_minimals}).

\begin{corollary}\label{coro:Caract_Benson_Scalar_Problem}
Let $X$ be a partially ordered normed space, $\cC$ the ordering cone,  and $x_0 \in A\subset X$. Assume that $(-\cC, \, \overline{\mbox{cone}}(A-x_0+ \cC))$ has SSP. Then $x_0 \in  \mbox{Be}(A,\cC)$ if and only if there exists $(f,\alpha) \in \cC^{a\#}_+$ such that $\underset{x \in A}{min} \ f(x-x_0)+ \alpha\Vert x-x_0 \Vert$ is attained only at $x_0$.
\end{corollary}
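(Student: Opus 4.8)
The plan is to derive the two implications from results already established, taking care to align the hypothesis $(-\cC,\,\overline{\mbox{cone}}(A-x_0+\cC))$ has SSP with the form of SSP appearing in the earlier statements. First I would observe, using Remark~\ref{remSSPcomplementario}(ii), that $(-\cC,\,\overline{\mbox{cone}}(A-x_0+\cC))$ has SSP if and only if $(\cC,\,-\overline{\mbox{cone}}(A-x_0+\cC))$ has SSP, and also (complementing with respect to the second cone) if and only if $(\cC,\,(X\setminus(-\overline{\mbox{cone}}(A-x_0+\cC)))_0)$ has SSP. Setting $\cQ:=-X\setminus\overline{\mbox{cone}}(A-x_0+\cC)$, which is exactly the open cone attached to the Benson case in Theorem~\ref{thm:Idenfi_proper_con_QMIn}(iii) and Corollary~\ref{coro:scalarization_proper_minimals}(iii), we have $\cQ_0=(X\setminus(-\overline{\mbox{cone}}(A-x_0+\cC)))_0$ up to the harmless ambiguity about whether $\cero$ lies in the set, so the hypothesis is equivalent to $(\cC,\,\cQ_0)$ having SSP. (One small check here: that $\cQ$ really is an open cone, i.e. that $X\setminus\overline{\mbox{cone}}(A-x_0+\cC)$ is open and $\cQ_0$ is a cone — this is implicit in the statement of Theorem~\ref{thm:Idenfi_proper_con_QMIn}(iii) and can be cited from there or from \cite{Ha2010}.)

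For the forward implication, assume $x_0\in\mbox{Be}(A,\cC)$. By Theorem~\ref{thm:Idenfi_proper_con_QMIn}(iii) this means $x_0\in\cQ\mbox{Min}(A)$ with $\cQ$ as above. Moreover, as computed in the proof of Corollary~\ref{coro:scalarization_proper_minimals}, the Benson condition $\overline{\mbox{cone}}(A-x_0+\cC)\cap(-\cC)=\{\cero\}$ is equivalent to $-\cC\setminus\{\cero\}\subset X\setminus\overline{\mbox{cone}}(A-x_0+\cC)=\cQ$, which gives $\cC\setminus\{\cero\}\subset-\cQ$; but we actually need $\cC\cap\cQ\neq\emptyset$ to invoke Theorem~\ref{thm:scalarization_Q_minimals}. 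This is where I would be most careful — it is the main (if modest) obstacle. I expect it follows from the SSP hypothesis via Corollary~\ref{coro:pos_relativa_conos_SSP}: since $(\cC,\cQ_0)$ has SSP, either $\overline{\mbox{co}}(\cC)\setminus\{\cero\}\subset\mbox{int}(\cQ_0)$ or $\overline{\mbox{co}}(\cC)\setminus\{\cero\}\subset\mbox{int}(X\setminus\cQ_0)$; the second alternative would force $\cC\setminus\{\cero\}$ to be disjoint from $\cQ_0$, hence from $-(-\cQ)$, contradicting $\cC\setminus\{\cero\}\subset-\cQ$ together with pointedness of $\cC$ (which gives a nonzero point of $\cC$ not in $-\cC$, and more directly $-\cQ$ and $\cQ$ cannot both contain all of $\cC\setminus\{\cero\}$ unless... — in fact the cleanest route is: $\cC\setminus\{\cero\}\subset-\cQ$ means $-\cC\setminus\{\cero\}\subset\cQ$, so $\cQ\cap(-\cC)\ne\emptyset$; and by Remark~\ref{remSSPcomplementario}(ii) SSP of $(\cC,\cQ_0)$ is symmetric enough that after replacing $\cQ$ by $-\cQ$ one gets $\cC\cap\cQ\ne\emptyset$, or one simply notes $\overline{\mbox{co}}(\cC)\cap\cQ\supset-\cC\setminus\{\cero\}$ is the wrong side and instead uses that the SSP hypothesis already delivers, via Theorem~\ref{thm:separacion_para_aplicar_Q_minimales} applied after the reflections in Remark~\ref{remSSPcomplementario}, the functional $(f,\alpha)\in\cC^{a\#}_+$ with $f(c)+\alpha\|c\|<0$ on $-\cC\setminus\{\cero\}$ and $f(x)+\alpha\|x\|>0$ on $X\setminus(-\cQ)\supset A-x_0$). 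Granting this, the conclusion $\min_{x\in A}f(x-x_0)+\alpha\|x-x_0\|$ attained only at $x_0$ follows exactly as in Theorem~\ref{thm:scalarization_Q_minimals}, because $A-x_0\subset X\setminus(-\cQ)$ and $0_X\mapsto 0$ is the value there.

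For the converse, suppose there is $(f,\alpha)\in\cC^{a\#}_+$ with $\min_{x\in A}f(x-x_0)+\alpha\|x-x_0\|$ attained only at $x_0$; equivalently $f(x-x_0)+\alpha\|x-x_0\|>0$ for all $x\in A\setminus\{x_0\}$, and $=0$ at $x_0$. I would show $\cero\in\mbox{Min}(\overline{\mbox{cone}}(A+\cC-x_0),\cC)$ directly. The map $p(\cdot):=f(\cdot)+\alpha\|\cdot\|$ is sublinear and continuous, hence $p\ge 0$ on $\mbox{cone}(A+\cC-x_0)$: indeed for $a\in A$, $c\in\cC$, $\lambda\ge0$ we get $p(\lambda(a-x_0+c))=\lambda(p(a-x_0+c))\ge\lambda(p(a-x_0)-\,(-p(c)))$... more precisely $p(a-x_0+c)\ge p(a-x_0)-(-p(c))$ is not the right inequality — instead use $p(a-x_0+c)\ge$ nothing automatic, so argue: $p(a-x_0)\le p(a-x_0+c)+p(-c)$, and since $(f,\alpha)\in\cC^{a\#}_+$ we have $f(c)-\alpha\|c\|>0$ for $c\in\cC\setminus\{\cero\}$, hence $p(-c)=-f(c)+\alpha\|c\|<0$, giving $p(a-x_0+c)\ge p(a-x_0)-p(-c)>p(a-x_0)\ge 0$ when $a\ne x_0$, and $\ge0$ in general; by continuity $p\ge0$ on $\overline{\mbox{cone}}(A+\cC-x_0)$. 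Now if $z\in\overline{\mbox{cone}}(A+\cC-x_0)\cap(-\cC)$ with $z\ne\cero$, then $p(z)\ge0$ from membership in the closed cone, but $z\in-\cC\setminus\{\cero\}$ forces $p(z)=f(z)+\alpha\|z\|<0$ since $-z\in\cC\setminus\{\cero\}$ gives $f(-z)-\alpha\|-z\|>0$; contradiction. Hence the intersection is $\{\cero\}$, i.e. $x_0\in\mbox{Be}(A,\cC)$. Note this direction uses only the existence of $(f,\alpha)\in\cC^{a\#}_+$ and not the SSP hypothesis, which is consistent with SSP being needed only for the (harder) forward direction.
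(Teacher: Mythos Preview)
Your overall architecture matches the paper's: the forward direction goes by translating the SSP hypothesis via Remark~\ref{remSSPcomplementario} into the form $(\cC,\cQ_0)$ has SSP with $\cQ$ the Benson dilation of Theorem~\ref{thm:Idenfi_proper_con_QMIn}(iii), then invoking Corollary~\ref{coro:scalarization_proper_minimals}(iii); the converse is handled separately. The difference in the converse is that the paper simply cites \cite[Theorem~1]{Gasimov2001}, whereas you supply the argument directly via sublinearity and the strict monotonicity encoded in $(f,\alpha)\in\cC^{a\#}_+$. Your direct argument is correct and is in fact the same computation the paper later writes out in the proof of Theorem~\ref{thm:escalar_sufficient_approximate_Benson}, so nothing is lost.

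The one genuine wobble is in your forward direction, where you oscillate between two incompatible readings of $\cQ$. You first (correctly) set $\cQ_0=(X\setminus(-\overline{\mbox{cone}}(A-x_0+\cC)))_0$, but a few lines later you write ``$-\cC\setminus\{\cero\}\subset X\setminus\overline{\mbox{cone}}(A-x_0+\cC)=\cQ$'', which is the \emph{other} sign. With the correct choice $\cQ=X\setminus(-\overline{\mbox{cone}}(A-x_0+\cC))$, the Benson condition $\overline{\mbox{cone}}(A-x_0+\cC)\cap(-\cC)=\{\cero\}$ is equivalent to $\cC\setminus\{\cero\}\subset\cQ$ (not $\subset -\cQ$), so $\cC\cap\cQ\neq\emptyset$ is immediate and your long detour through Corollary~\ref{coro:pos_relativa_conos_SSP} is unnecessary: Theorem~\ref{thm:scalarization_Q_minimals} (equivalently Corollary~\ref{coro:scalarization_proper_minimals}(iii)) applies directly. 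Likewise $-\cQ=X\setminus\overline{\mbox{cone}}(A-x_0+\cC)$ gives $X\setminus(-\cQ)=\overline{\mbox{cone}}(A-x_0+\cC)\supset A-x_0$, which is exactly what you need at the end. Once you fix the sign consistently, the muddled middle paragraph collapses to two lines and the proof is clean.
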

\begin{proof}
It is not restrictive to assume that $x_0=0_X$.
$\Rightarrow$ As $(-\cC, \,\overline{\mbox{cone}}(A+ \cC))$ has SSP, then  $(\cC, \, -(X\setminus \overline{\mbox{cone}}(A+ \cC))_0)$ has SSP too. Therefore, Remark \ref{remSSPcomplementario} yields that   $(-\cC, \,(X\setminus \overline{\mbox{cone}}(A+ \cC))_0)$ has SSP as well.  Now, apply Theorem \ref{thm:Idenfi_proper_con_QMIn}   (iii) and Corollary~\ref{coro:scalarization_proper_minimals}~(iii).  $\Leftarrow$ Fix $(f,\alpha) \in \cC^{a\#}_+$ such that $\underset{x \in A}{\min} \ f(x)+ \alpha\Vert x \Vert$ is attained only at $0_X$. 
By \cite[Theorem 1]{Gasimov2001}, $\cero \in \mbox{Be}(A,\cC)$.
\end{proof}

 \cite[Theorem~5.8]{Kasimbeyli2010} also characterizes Benson proper efficient points via (\ref{eq:scalarization_Q_proper_minimals}),  but under more restrictive assumptions than Corollary \ref{coro:Caract_Benson_Scalar_Problem} and, in addition, applying the separation property to  a sequence of $\epsilon$-conic neighbourhoods instead of to an only cone $- \overline{\mbox{cone}}(A-x_0+ \cC)$.

In the following, we study sufficient conditions to have $\mbox{GHe}(A,\cC)=\mbox{Be}(A,\cC)$. Such equality will lead to a characterization for Henig global proper efficient points via (\ref{eq:scalarization_Q_proper_minimals}). The set GHe($A,\cC$)  is contained in the set Be($A,\cC$) whenever $\cC$ is a closed, convex, and pointed cone (see \cite{Guerraggio1994}). The next result establishes the equality of such sets under some extra assumptions.  
\begin{theorem}\label{thm:Be=He}
Let $X$ be a partially ordered normed space, $\cC$ the ordering cone, and $A\subset X$ a subset such that $A+\cC$ is convex. If $\cC$ has a weakly compact base, then $\mbox{GHe}(A,\cC)=\mbox{Be}(A,\cC)$.
\end{theorem}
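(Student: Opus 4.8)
The plan is to prove the two inclusions separately. For the inclusion $\mbox{GHe}(A,\cC)\subseteq\mbox{Be}(A,\cC)$ I would rely on the fact recalled just before the statement, for which it suffices to check that $\cC$ is closed, convex and pointed. Convexity and pointedness are immediate, since $\cC$ has a base. For closedness, let $B$ be the given weakly compact base, $\delta_B:=\inf_{b\in B}\|b\|>0$ and $M:=\sup_{b\in B}\|b\|<+\infty$, and suppose $t_nb_n\to x\neq\cero$ with $t_n\geq 0$, $b_n\in B$; since $t_n=\|t_nb_n\|/\|b_n\|$ and $\|b_n\|\in[\delta_B,M]$, the scalars $t_n$ lie eventually in a fixed compact subinterval of $(0,+\infty)$, so along a subsequence $t_n\to t_*>0$, and (weakly compact sets being weakly sequentially compact, by Eberlein--Smulian) along a further subsequence $b_n\rightharpoonup b_*\in B$; then $t_nb_n\rightharpoonup t_*b_*$, hence $x=t_*b_*\in\cC$ because $\cC=\mbox{cone}(B)$. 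Thus $\cC$ is closed, convex and pointed, and $\mbox{GHe}(A,\cC)\subseteq\mbox{Be}(A,\cC)$ by \cite{Guerraggio1994}.

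For the reverse inclusion, fix $x_0\in\mbox{Be}(A,\cC)$; translating, we may assume $x_0=\cero$, so that $\cD:=\overline{\mbox{cone}}(A+\cC)$ satisfies $\cD\cap(-\cC)=\{\cero\}$. Since $A+\cC$ is convex, $\mbox{cone}(A+\cC)$ is convex (the cone generated by a convex set is convex), hence $\cD$ is a closed convex cone and in particular weakly closed. For $0<\eta<\delta_B$ consider the convex, pointed, open cones $V_\eta(B)=\mbox{cone}(B+\eta B^{\circ}_X)$ from the paragraph before Theorem~\ref{thm:Idenfi_proper_con_QMIn}, each of which contains $\cC\setminus\{\cero\}$. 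The core of the argument is the claim that $\cD\cap(-V_\eta(B))=\{\cero\}$ for some $\eta\in(0,\delta_B)$. If not, pick $\eta_n\downarrow 0$ and $d_n\in\cD\cap(-V_{\eta_n}(B))$ with $\|d_n\|=1$, and write $-d_n=t_n(b_n+\eta_nu_n)$ with $t_n>0$, $b_n\in B$, $u_n\in B^{\circ}_X$. From $\delta_B-\eta_n\leq\|b_n+\eta_nu_n\|\leq M+\eta_n$ the scalars $t_n=1/\|b_n+\eta_nu_n\|$ lie eventually in a fixed compact subinterval of $(0,+\infty)$; passing to subsequences, $t_n\to t_*>0$ and $b_n\rightharpoonup b_*\in B$, and since $\|t_n\eta_nu_n\|\to 0$ we get $-d_n=t_nb_n+t_n\eta_nu_n\rightharpoonup t_*b_*$. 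As $\cD$ is weakly closed, $-t_*b_*\in\cD$; but $t_*b_*\in\cC\setminus\{\cero\}$, so $-t_*b_*\in(\cD\cap(-\cC))\setminus\{\cero\}$, contradicting $\cD\cap(-\cC)=\{\cero\}$ and proving the claim.

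Finally, fixing such an $\eta$ and putting $\cK:=V_\eta(B)\cup\{\cero\}$, the set $\cK$ is a convex cone with $\cC\setminus\{\cero\}\subseteq V_\eta(B)\subseteq\mbox{int}(\cK)$; moreover $(-\cK)\cap\cD=(\cD\cap(-V_\eta(B)))\cup\{\cero\}=\{\cero\}$, and since $A\subseteq A+\cC\subseteq\cD$ while $\cero\in A\cap(-\cK)$, this yields $(\cero-\cK)\cap A=\{\cero\}$, i.e. $\cero\in\mbox{Min}(A,\cK)$, so $\cero\in\mbox{GHe}(A,\cC)$; undoing the translation gives $x_0\in\mbox{GHe}(A,\cC)$. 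The main obstacle is the claim: one has to keep the scalars $t_n$ bounded away from both $0$ and $+\infty$ — which is where both $\delta_B>0$ and $M<+\infty$ (boundedness of the base) are used — and then pass to the weak limit in the product $t_nb_n$; that last step forces $\cD$ to be weakly closed, which is precisely where the hypothesis that $A+\cC$ is convex enters.
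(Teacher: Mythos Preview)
Your proof is correct, but the route for the nontrivial inclusion differs from the paper's. For $\supset$, the paper reaches the same point as you --- $(-\cC)\cap\overline{\mbox{cone}}(A+\cC)=\{0_X\}$ with $\overline{\mbox{cone}}(A+\cC)$ weakly closed by convexity of $A+\cC$ --- and then simply invokes \cite[Theorem~5.2]{Kasimbeyli2010} as a black box to obtain a convex cone $\cK$ with $-\cC\setminus\{0_X\}\subset\mbox{int}(\cK)$ and $\cK\cap\overline{\mbox{cone}}(A+\cC)=\{0_X\}$. You instead construct the dilating cone explicitly as $V_\eta(B)$ and prove the separation by hand: the contradiction argument via Eberlein--\v{S}mulian, controlling the scalars $t_n$ through $\delta_B$ and $M$, is precisely a direct proof of (the relevant case of) Kasimbeyli's theorem. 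Your version is more self-contained and makes transparent exactly where the weak compactness of the base and the convexity of $A+\cC$ are used; the paper's version is shorter but outsources the geometric content.

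For $\subset$, the paper cites \cite[Proposition~2.4.6~(i)]{Zalinescu2015}, which is stated for separated topological vector spaces without assuming the cone is closed, so no verification of closedness of $\cC$ is needed. Your detour through proving $\cC$ closed (in order to quote \cite{Guerraggio1994}) is correct but avoidable.
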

\begin{proof}
$\subset$ is provided by \cite[Proposition 2.4.6 (i)]{Zalinescu2015} for  separated topological vector spaces and non closed cones.
$\supset$ Fix an arbitrary $x_0 \in$ Be($A,\cC$). It is not restrictive to assume that $x_0=0_X$. Then $0_X\in \mbox{Min}(A,\cC)$ and $\cero\in \mbox{Min}(\overline{\mbox{cone}}(A+\cC),\cC)$. Hence $(-\cC)\cap \overline{\mbox{cone}}(A+\cC)=\{\cero\}$.  On the other hand,  $A+\cC$ is convex. Then $\mbox{cone}(A+\cC)$ is convex,  implying that $\overline{\mbox{cone}}(A+\cC)$ is weak closed.  Now,  \cite[Theorem~5.2]{Kasimbeyli2010} applies and there exists a convex cone $\cK$ such that $-\cC\setminus \{\cero\}\subset \mbox{int}(\cK)$ and $\cK\cap\overline{\mbox{cone}}(A+\cC)=\{\cero\}$. Then $0_X \in \mbox{GHe}(A,\cC)$.
\end{proof}
The following result is a direct consequence of  Corollary \ref{coro:Caract_Benson_Scalar_Problem} and Theorem~\ref{thm:Be=He}.

\begin{corollary}\label{coro:charact_scalarization_GHe}
Let $X$ be a partially ordered normed space, $\cC$ the ordering cone,  and $x_0 \in A\subset X$ such that $A+\cC$ is convex. Assume that $\cC$  has a weakly compact base. If $(-\cC, \, \overline{\mbox{cone}}(A-x_0+ \cC))$ has SSP, then $x_0 \in$ GHe($A,\cC$) if and only if there exists $(f,\alpha)\in \cC^{a\#}_+$ such that $\min_{x\in A}\{f(x-x_0)+\alpha \| x-x_0 \|\}$ is attained only at $x_0$.
\end{corollary}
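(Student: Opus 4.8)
The statement to be proved is Corollary~\ref{coro:charact_scalarization_GHe}, and since it is explicitly billed as "a direct consequence of Corollary~\ref{coro:Caract_Benson_Scalar_Problem} and Theorem~\ref{thm:Be=He}," the plan is simply to chain these two results together. First I would reduce to the case $x_0 = 0_X$ by translation, observing that translating $A$ by $-x_0$ leaves the hypothesis "$A + \cC$ convex" intact (it becomes $A - x_0 + \cC$ convex), leaves "$\cC$ has a weakly compact base" untouched (it is a property of $\cC$ alone), and transports the SSP hypothesis from $(-\cC,\,\overline{\mbox{cone}}(A - x_0 + \cC))$ to $(-\cC,\,\overline{\mbox{cone}}(A + \cC))$. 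It also transports both the conclusion $x_0 \in \mbox{GHe}(A,\cC)$ and the scalar minimization statement in the obvious way, because $\mbox{GHe}$ is defined via $\mbox{Min}(A,\cK)$ which is translation-covariant, and the objective $f(x - x_0) + \alpha\|x - x_0\|$ becomes $f(x) + \alpha\|x\|$ on the translated set.

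**Core argument.** With $x_0 = 0_X$, I invoke Theorem~\ref{thm:Be=He}: since $A + \cC$ is convex and $\cC$ has a weakly compact base, we have $\mbox{GHe}(A,\cC) = \mbox{Be}(A,\cC)$. Hence "$0_X \in \mbox{GHe}(A,\cC)$" is equivalent to "$0_X \in \mbox{Be}(A,\cC)$." Now I apply Corollary~\ref{coro:Caract_Benson_Scalar_Problem} with $x_0 = 0_X$: its hypothesis is exactly that $(-\cC,\,\overline{\mbox{cone}}(A + \cC))$ has SSP, which is precisely the SSP hypothesis we carried over. That corollary then gives the equivalence of "$0_X \in \mbox{Be}(A,\cC)$" with "there exists $(f,\alpha) \in \cC^{a\#}_+$ such that $\min_{x \in A}\{f(x) + \alpha\|x\|\}$ is attained only at $0_X$." Combining the two equivalences yields the desired statement in the normalized case, and untranslating finishes the proof.

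**Expected obstacle.** There is no serious mathematical obstacle here — the content is entirely in Theorem~\ref{thm:Be=He} and Corollary~\ref{coro:Caract_Benson_Scalar_Problem}, both of which are available. The only thing that needs a moment's care is the bookkeeping of the translation: one must confirm that every hypothesis of Theorem~\ref{thm:Be=He} and of Corollary~\ref{coro:Caract_Benson_Scalar_Problem} survives the shift $A \mapsto A - x_0$, and in particular that the SSP hypothesis as stated in the corollary (with the $-x_0$ already built into the cone) matches the un-shifted hypothesis of Corollary~\ref{coro:Caract_Benson_Scalar_Problem} after normalization. Since Corollary~\ref{coro:Caract_Benson_Scalar_Problem} itself is stated with the $-x_0$ present and its proof already begins "it is not restrictive to assume $x_0 = 0_X$," this matching is immediate, and the proof is a two-line citation.
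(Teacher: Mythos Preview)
Your proposal is correct and follows exactly the approach the paper intends: chain Theorem~\ref{thm:Be=He} (giving $\mbox{GHe}(A,\cC)=\mbox{Be}(A,\cC)$ under the convexity and weakly compact base assumptions) with Corollary~\ref{coro:Caract_Benson_Scalar_Problem} (giving the scalarization equivalence for $\mbox{Be}$ under the SSP hypothesis). The translation step you spell out is harmless but in fact unnecessary, since both cited results are already stated for a general $x_0\in A$; you can simply invoke them directly and combine the two equivalences in one line.
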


Corollary \ref{coro:Caract_Benson_Scalar_Problem} provides a sufficient condition to find elements in $\mbox{TBo}(A,\cC)$.  In the following, we will establish that it becomes a characterization if we assume the following geometric condition on $x_0 \in A$.  It is said that a set $A\subset X$ is starshaped at some $x_0 \in A$, if $\lambda x+(1-\lambda)x_0 \in A$ for every $x \in A$ and $\lambda \in [0,1]$. Since \cite[Corollary 3.46]{Jahn2004} establishes that $T(A,x_0)=\overline{\mbox{cone}}(A-x_0)$ whenever $A$ is starshaped at $x_0 \in A$, it follows the equivalence $x_0\in \mbox{Be}(A,\cC) \Leftrightarrow x_0\in \mbox{TBo}(A,\cC)$ whenever $A+\cC$ is starshaped at $x_0 \in A$. Consequently, we obtain the following characterization for tangentially Borwein proper efficient points.

\begin{corollary}\label{coro:charact_scalarization_TBo}
Let $X$ be a partially ordered normed space, $\cC$ the ordering cone, and $x_0 \in A\subset X$. Assume that $A+\cC$ is starshaped at $x_0$ and that $\cC$ has a weakly compact base. If $(-\cC, \,  \overline{\mbox{cone}}(A-x_0+ \cC))$ has SSP, then $x_0 \in$ TBo($A,\cC$) if and only if there exists $(f,\alpha)\in \cC^{a\#}_+$ such that $\min_{x\in A}\{f(x-x_0)+\alpha \| x-x_0 \|\}$ is attained only at $x_0$.
\end{corollary}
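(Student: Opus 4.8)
The plan is to reduce the statement to the characterization of Benson proper efficient points already obtained in Corollary~\ref{coro:Caract_Benson_Scalar_Problem}, using starshapedness as the bridge. Recall the observation made just before the statement: since $A+\cC$ is starshaped at $x_0$, \cite[Corollary~3.46]{Jahn2004} gives $T(A+\cC,x_0)=\overline{\mbox{cone}}(A+\cC-x_0)$. Comparing Definition~\ref{defi:proper_minimal_point}(iii) and (ix) (equivalently, the reformulations in Theorem~\ref{thm:Idenfi_proper_con_QMIn}(iii) and (ix)), both $x_0\in\mbox{Be}(A,\cC)$ and $x_0\in\mbox{TBo}(A,\cC)$ amount to intersecting $-\cC$ with this one common set and getting $\{\cero\}$; hence $x_0\in\mbox{TBo}(A,\cC)\Leftrightarrow x_0\in\mbox{Be}(A,\cC)$. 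I would record this equivalence first.

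Next I would note that the standing hypothesis ``$(-\cC,\overline{\mbox{cone}}(A-x_0+\cC))$ has SSP'' is \emph{exactly} the hypothesis of Corollary~\ref{coro:Caract_Benson_Scalar_Problem}, so that corollary applies without modification: $x_0\in\mbox{Be}(A,\cC)$ if and only if there exists $(f,\alpha)\in\cC^{a\#}_+$ for which $\min_{x\in A}\{f(x-x_0)+\alpha\|x-x_0\|\}$ is attained only at $x_0$. Chaining this with the equivalence from the previous paragraph yields the assertion. Thus the proof is essentially a two-line composition of the starshapedness identity with Corollary~\ref{coro:Caract_Benson_Scalar_Problem}.

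There is no real obstacle here. The only points worth a word of care are: (a) applying the Jahn identity to the set $A+\cC$ rather than to $A$ itself, so that the cone featuring in TBo coincides with the one in Benson's definition; and (b) observing that, unlike in Corollary~\ref{coro:charact_scalarization_GHe}, where weak compactness of the base is essential to force $\mbox{GHe}(A,\cC)=\mbox{Be}(A,\cC)$ via Theorem~\ref{thm:Be=He}, here the equality $\mbox{TBo}(A,\cC)=\mbox{Be}(A,\cC)$ follows purely from starshapedness, so the weakly-compact-base hypothesis is not actually used in the argument and is kept only for uniformity with the Henig case. If one wished to avoid citing Corollary~\ref{coro:Caract_Benson_Scalar_Problem}, the ``$\Leftarrow$'' direction could be obtained directly from \cite[Theorem~1]{Gasimov2001} and the ``$\Rightarrow$'' direction from Theorem~\ref{thm:Idenfi_proper_con_QMIn}(ix) and Corollary~\ref{coro:scalarization_proper_minimals}(ix), after rewriting SSP by means of Remark~\ref{remSSPcomplementario}; but the route through Corollary~\ref{coro:Caract_Benson_Scalar_Problem} is shorter.
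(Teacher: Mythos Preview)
Your proposal is correct and follows exactly the route the paper takes: the discussion preceding the corollary uses starshapedness of $A+\cC$ at $x_0$ together with \cite[Corollary~3.46]{Jahn2004} to get $T(A+\cC,x_0)=\overline{\mbox{cone}}(A+\cC-x_0)$, hence $\mbox{TBo}(A,\cC)=\mbox{Be}(A,\cC)$, and then invokes Corollary~\ref{coro:Caract_Benson_Scalar_Problem}. Your side observation that the weakly-compact-base assumption is not actually used in this argument is also accurate.
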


We finish this section with the following problem for future research.
\begin{problem}\label{problem}
 Is it possible to characterize $\cQ$-minimal points via SSP assuming any extra conditions?
\end{problem}
In Theorem \ref{thm:scalarization_Q_minimals} we provide necessary conditions for $\cQ$-minimal points.   On the other hand,  in Corollaries \ref{coro:Caract_Benson_Scalar_Problem}, \ref{coro:charact_scalarization_GHe},  and \ref{coro:charact_scalarization_TBo}, we  establish characterizations of Benson, global Henig, and tangentially Borwein proper efficient points, respectively, answering the former problem for such particular kinds of $\cQ$-minimal points.  So, it is of interest to solve Problem~\ref{problem} for any of the other types of $\cQ$-minimal points.

\section{Scalarization for approximate  proper  efficient points}\label{sec:approximate_proper}
In this section, we obtain optimal conditions  through scalarization for approximate proper efficient points in the senses of Benson and Henig. We obtain our results after extending the approach for Benson and Henig proper efficient points in the precedent section. 

Let us introduce the  terminology of approximate proper efficiency. From now on, the ordering cone $\cC \subset X$  is assumed to be closed, convex, and pointed. The notions of approximate efficiency are defined replacing the ordering cone $\cC$ by a non-empty set $D$ that approximates it. For a non-empty set $D\subset X\setminus\{\cero\}$, we define the set $D(\epsilon):=\epsilon D$,  for $\epsilon>0$,  and $D(0):=\mbox{cone}(D)\setminus \{\cero\}$. 
We also  introduce the family of sets
$\overline{\cH}:=\{\emptyset\not = D\subset X\setminus \{\cero\}\colon \overline{\mbox{cone}}(D)\cap (-\cC)=\{\cero\}\}$.  Notice that $D(\epsilon)\in \overline{\cH}$, for every $D\in \overline{\cH}$ and $\epsilon\geq 0$. Now, fixed any $D\in \overline{\cH}$, we  introduce the family $\cG(D):=\{\cC'\subset X\colon \cC' \mbox{ is an open convex cone, } \cC\setminus \{\cero\}\subset \cC',\, D\cap (-\cC')=\emptyset\}$. Note that $\cG(D(\epsilon))=\cG(D)$ for every $\epsilon\geq 0$.
The following notion was introduced by Gutierrez, Huerga, and Novo in \cite{Gutierrez2012} for locally convex spaces. 
\begin{definition}\label{defi:Benson_approximate}
Let $X$ be a partially ordered normed space, $\cC$ the ordering cone,  $A\subset X$ a subset, $\epsilon\geq 0$, and $D \in \overline{\cH}$.  We say that $x_0 \in A$ is a Benson $(D,\epsilon)$-efficient  point  of $A$,  written $x_0 \in \mbox{Be}(A, \cC,D, \epsilon)$, if $\cero \in \mbox{Min}(\overline{\mbox{cone}}(A+D(\epsilon)-x_0),\cC)$.
\end{definition}

For the notion of approximate Henig efficiency we take the characterization \cite[Theorem~3.3~(c)]{Gutierrez2016} (adapted to normed spaces) as a definition instead of the original \cite[Definition 3.1]{Gutierrez2016}.
\begin{definition}\label{defi:Henig_approximate}
Let $X$ be a partially ordered normed space, $\cC$ the ordering cone,  $A\subset X$ a subset, $\epsilon\geq 0$, and $D \in \overline{\cH}$. We say that $x_0 \in A$ is a Henig $(D,\epsilon)$-efficient  point  of $A$,  written $x_0 \in \mbox{He}(A, \cC,D, \epsilon)$, if there exists $\cC_{D,\epsilon}\in \cG(D)$  such that $\overline{\mbox{cone}}(A + D(\epsilon)-x_0) \cap (-\cC_{D,\epsilon})=\emptyset$.
\end{definition}

It is clear that $\mbox{He}(A, \cC,D, \epsilon)\subset \mbox{Be}(A, \cC,D, \epsilon)$.  We begin our analysis determining two necessary conditions for approximate proper efficiency in the sense of Benson. 
Following \cite{Gutierrez2006}, we denote by $\mbox{AMin}(g,A,\varepsilon)$ the set of $\varepsilon$-approximate solutions of the scalar optimization problem $\underset{x\in A}{\mbox{Min}} \,g(x)$, i.e.,  $\mbox{AMin}(g,A,\varepsilon)=\{x \in A\colon g(x)-\varepsilon \leq g(z), \ \forall z\in A\}$, where $g:X\rightarrow \mathbb{R}$,  $A\subset X$, $A\not = \emptyset$,  and $\varepsilon >0$.  By means of approximate solutions were derived necessary and sufficient conditions for $\epsilon$-efficient solutions in \cite{Gutierrez2006}.  On the other hand, for every $(f,\alpha) \in \cC^{a\#}_+$ and $x_0 \in A$, we denote by $g_{(f,\alpha,x_0)}$ the mapping defined by $g_{(f,\alpha,x_0)}(x)=f(x-x_0)+\alpha \| x-x_0 \|$ for every $x \in X$.  For simplicity of notation,  we write $g_{(f,\alpha)}$ instead of the sublinear map $g_{(f,\alpha,\cero)}$.
\begin{theorem} \label{thm:escalar_neces_approximate_Benson}
Let $X$ be a partially ordered normed space, $\cC$ the ordering cone,  $x_0 \in A\subset X$, $\epsilon\geq 0$, and $D \in \overline{\cH}$. 
Assume that  $(-\cC,\,\overline{\mbox{cone}}(A-x_0+D(\epsilon)))$ has SSP.  If $x_0 \in \mbox{Be}(A, \cC, D,\epsilon)$, then there exists $(f,\alpha) \in \cC^{a\#}_+$ such that:
\begin{itemize}
\item[(i)]  $\underset{x \in (A+D(\epsilon))\cup\{x_0\}}{\mbox{min}} \ f(x-x_0)+ \alpha\Vert x-x_0 \Vert$ is attained only at $x_0$.
\item[(ii)]  $x_0 \in \mbox{AMin}(g_{(f,\alpha,x_0)},A,\lambda)$ for $\lambda=  \underset{d \in D(\epsilon)}{\mbox{inf}} f(d)+ \alpha\Vert d  \Vert$. 
\end{itemize}
\end{theorem}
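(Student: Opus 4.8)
The plan is to reduce the approximate Benson efficiency to an ordinary $\cQ$-minimality problem for the perturbed set $A+D(\epsilon)$ and then invoke the already-established separation machinery. First I would assume, without loss of generality, that $x_0 = \cero$ (a translation argument). The key observation is that $x_0 \in \mbox{Be}(A,\cC,D,\epsilon)$ means exactly $\cero \in \mbox{Min}(\overline{\mbox{cone}}(A+D(\epsilon)),\cC)$, which, arguing as in the proof of Corollary~\ref{coro:scalarization_proper_minimals}~(iii), is equivalent to $-\cC\setminus\{\cero\}\subset X\setminus \overline{\mbox{cone}}(A+D(\epsilon))$; setting $\cQ := -\,X\setminus\overline{\mbox{cone}}(A+D(\epsilon))$, this says $\cC\setminus\{\cero\}\subset\cQ$ and that $\cero$ is a $\cQ$-minimal point of $A+D(\epsilon)$ together with $\{x_0\}$, i.e., $((A+D(\epsilon))\cup\{\cero\})\cap(-\cQ)=\{\cero\}$ once the origin is adjoined. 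In particular $\cC\cap\cQ\neq\emptyset$.

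Next I would translate the SSP hypothesis into the form needed by Theorem~\ref{thm:separacion_para_aplicar_Q_minimales}. We are given that $(-\cC,\,\overline{\mbox{cone}}(A+D(\epsilon)))$ has SSP. Using Remark~\ref{remSSPcomplementario}~(ii) (passing to the complement cone) and part~(i) (SSP is insensitive to closures), this is equivalent to $(-\cC,\,(X\setminus\overline{\mbox{cone}}(A+D(\epsilon)))_0)$ having SSP, hence to $(\cC,\,-(X\setminus\overline{\mbox{cone}}(A+D(\epsilon)))_0)$ having SSP, that is, $(\cC,\,\cQ_0)$ has SSP. Combined with $\cC\cap\cQ\neq\emptyset$, Theorem~\ref{thm:separacion_para_aplicar_Q_minimales} (equivalently Theorem~\ref{thm:scalarization_Q_minimals} applied to the set $A+D(\epsilon)$) yields $(f,\alpha)\in\cC^{a\#}_+$ and $\delta_2>\delta_1>0$ with $f(c)+\alpha\|c\|<0<f(x)+\alpha\|x\|$ for every $c\in -\cC\setminus\{\cero\}$ and every $x\in X\setminus(-\cQ)$. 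Since $A+D(\epsilon)\subset X\setminus(-\cQ)$ and $\cero$ trivially gives value $0$, assertion~(i) follows: $g_{(f,\alpha)}(x)=f(x)+\alpha\|x\|>0$ for all $x\in(A+D(\epsilon))$, $x\neq\cero$, so the minimum over $(A+D(\epsilon))\cup\{\cero\}$ is attained only at $\cero$.

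For assertion~(ii) I would exploit the sublinearity of $g_{(f,\alpha)}$. For any $a\in A$ and $d\in D(\epsilon)$ we have $0\le g_{(f,\alpha)}(a+d)\le g_{(f,\alpha)}(a)+g_{(f,\alpha)}(d)$, hence $g_{(f,\alpha)}(a)\ge -g_{(f,\alpha)}(d)$; taking the supremum over $d\in D(\epsilon)$ on the right gives $g_{(f,\alpha)}(a)\ge -\sup_{d\in D(\epsilon)}g_{(f,\alpha)}(d) = \inf_{d\in D(\epsilon)}\big(-g_{(f,\alpha)}(-d)\big)$; more directly, $g_{(f,\alpha)}(a)\ge \inf_{d\in D(\epsilon)} f(d)+\alpha\|d\| =: \lambda$ is not quite what we want, so instead I would argue: from $g_{(f,\alpha)}(a+d)\ge 0$ and $g_{(f,\alpha)}(a+d)\le g_{(f,\alpha)}(a)+g_{(f,\alpha)}(d)$ we get $g_{(f,\alpha)}(a)\ge -g_{(f,\alpha)}(d)$ for every $d$, i.e. $g_{(f,\alpha)}(a)\ge \sup_{d\in D(\epsilon)}\big(-g_{(f,\alpha)}(d)\big)=-\inf_{d\in D(\epsilon)}g_{(f,\alpha)}(d)=-\lambda\le 0$, whence $g_{(f,\alpha,x_0)}(x_0)=0\le g_{(f,\alpha,x_0)}(z)+\lambda$ for all $z\in A$; that is $x_0\in\mbox{AMin}(g_{(f,\alpha,x_0)},A,\lambda)$. (Here one checks $\lambda\ge 0$ because $0\notin D(\epsilon)$ for $\epsilon>0$ and because $(f,\alpha)\in\cC^{a\#}_+$ forces positivity on $D$; the limiting case $\epsilon=0$ where $D(0)=\mbox{cone}(D)\setminus\{\cero\}$ is handled by the same inequality, with $\lambda$ possibly $0$.) The main obstacle I anticipate is the bookkeeping around the complement-cone manipulations in Remark~\ref{remSSPcomplementario} and making sure the parameter $\lambda$ is well-defined and nonnegative uniformly in the two cases $\epsilon>0$ and $\epsilon=0$; the separation step itself is essentially a black-box application of Theorem~\ref{thm:separacion_para_aplicar_Q_minimales}.
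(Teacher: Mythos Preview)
Your plan is essentially the paper's proof: translate to $x_0=\cero$, convert the SSP hypothesis via Remark~\ref{remSSPcomplementario} to $(\cC,\cQ_0)$ with $\cQ=-\,X\setminus\overline{\mbox{cone}}(A+D(\epsilon))$, apply Theorem~\ref{thm:separacion_para_aplicar_Q_minimales} (using $\cC\setminus\{\cero\}\subset\cQ$ from the Benson condition), read off (i), and derive (ii) from sublinearity of $g_{(f,\alpha)}$.

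One small slip to fix: your justification that $\lambda\ge 0$ ``because $(f,\alpha)\in\cC^{a\#}_+$ forces positivity on $D$'' is not correct, since $D$ need not lie in $\cC$ (the condition $D\in\overline{\cH}$ only says $\overline{\mbox{cone}}(D)\cap(-\cC)=\{\cero\}$). The paper's argument is the right one: because $x_0=\cero\in A$, every $d\in D(\epsilon)$ satisfies $d=0+d\in A+D(\epsilon)$ and $d\neq\cero$, so part~(i) itself gives $f(d)+\alpha\|d\|>0$, whence $\lambda\ge 0$. With that correction your proof goes through and matches the paper's.
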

\begin{proof}
It is not restrictive to assume that $x_0=0_X$. Since $(-\cC,\,\overline{\mbox{cone}}(A+D(\epsilon)))$ has SSP, then $(\cC,\, -\overline{\mbox{cone}}(A+D(\epsilon)))$ has SSP too. Therefore, by Remark \ref{remSSPcomplementario},  $(\cC,\,-(X\setminus\overline{\mbox{cone}}(A+D(\epsilon))_0))$ has SSP as well.  As $\cero \in \mbox{Be}(A, \cC, D,\epsilon)$ implies $\cC\setminus \{\cero \} \subset -X\setminus\overline{\mbox{cone}}(A+D(\epsilon))$,  Theorem~\ref{thm:separacion_para_aplicar_Q_minimales} applies and there exists $(f,\alpha) \in \cC^{a\#}_+$ such that $f(c)+ \alpha\Vert c \Vert <0<f(x)+\alpha \Vert x \Vert$ for every $ c\in -\cC$, $c\not = \cero$, and  $x\in \overline{\mbox{cone}}(A+D(\epsilon))$, $x \not = \cero$.  Since $A+D(\epsilon) \subset \overline{\mbox{cone}}(A+D(\epsilon))$, it follows that  $0<f(x)+\alpha \Vert x \Vert$ for every $x \in A+D(\epsilon) $, $x \neq \cero$. Then, we have (i).  Let us prove (ii).  Since $g_{(f,\alpha)}$ is a sublinear map, we have
 $0<f(x)+\alpha \Vert x \Vert + f(d)+\alpha \Vert d \Vert $ 
 for every $x \in A$, $x\not =\cero$, and $d \in D(\varepsilon)$. Fixing $x=\cero$, we get that $\lambda:=\underset{d \in D(\epsilon)}{\mbox{inf}} f(d )+ \alpha\Vert d  \Vert  \geq 0$.  Consequently,  $0\leq f(x)+\alpha \Vert x \Vert + \lambda $  for every $x \in A$,  i.e.,  $g_{(f,\alpha,\cero)}(\cero) -\lambda\leq g_{(f,\alpha,\cero)}(x)$ for every $x \in A$. Then $\cero \in \mbox{AMin}(g_{(f,\alpha,x_0)},A,\lambda)$.
\end{proof}

Let us recall that a function $g:X\rightarrow \R$ is strongly monotonically increasing if for each $x$, $y\in X$, $y-x\in \cC\setminus \{\cero\} \Rightarrow g(x)<g(y)$.   It is clear that $g_{(f,\alpha)}$ is strongly monotonically increasing  for every $(f,\alpha) \in \cC^{a\#}_+$.  Monotonicity will be used in the proof of the following result showing that the necessary condition (i) in Theorem \ref{thm:escalar_neces_approximate_Benson}  is also sufficient.
\begin{theorem} \label{thm:escalar_sufficient_approximate_Benson}
Let $X$ be a partially ordered normed space, $\cC$ the ordering cone,  $x_0 \in A\subset X$, $\epsilon\geq 0$, and $D \in \overline{\cH}$.  If there exists $(f,\alpha) \in \cC^{a\#}_+$ such that $f(x-x_0)+ \alpha\Vert x-x_0 \Vert\geq 0$ for every $x \in A+D(\epsilon)$, then $x_0 \in \mbox{Be}(A, \cC, D,\epsilon)$.
\end{theorem}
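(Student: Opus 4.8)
The plan is to reduce to the case $x_0=\cero$ by translation, and then to verify directly that $\cero\in\mbox{Min}(\overline{\mbox{cone}}(A+D(\epsilon)),\cC)$, which by Definition~\ref{defi:Benson_approximate} is precisely the assertion $\cero\in\mbox{Be}(A,\cC,D,\epsilon)$. Unwinding the definition of $\mbox{Min}$, and noting that the generated cone $\mbox{cone}(A+D(\epsilon))$ contains $\cero$ (since $A$ and $D$, hence $A+D(\epsilon)$, are nonempty), the task is to show that $(-\cC)\cap\overline{\mbox{cone}}(A+D(\epsilon))=\{\cero\}$. Observe that SSP plays no role in this direction, which is consistent with the hypotheses of the statement; the argument is the converse-type reasoning already used at the end of the proof of Corollary~\ref{coro:Caract_Benson_Scalar_Problem}, now with $D(\epsilon)$ in place of $\cC$.

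First I would record that the map $g_{(f,\alpha)}(x)=f(x)+\alpha\Vert x\Vert$ is sublinear, hence positively homogeneous and norm-continuous, and that by hypothesis $g_{(f,\alpha)}\geq 0$ on $A+D(\epsilon)$. Positive homogeneity upgrades this at once to $g_{(f,\alpha)}\geq 0$ on $\mbox{cone}(A+D(\epsilon))$ (for $\lambda>0$ and $s\in A+D(\epsilon)$ one has $g_{(f,\alpha)}(\lambda s)=\lambda g_{(f,\alpha)}(s)\geq 0$, and $g_{(f,\alpha)}(\cero)=0$), and continuity of $g_{(f,\alpha)}$ then propagates the inequality to the closure, so $g_{(f,\alpha)}\geq 0$ on $\overline{\mbox{cone}}(A+D(\epsilon))$.

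Then I would argue by contradiction. Suppose there were $z\in(-\cC)\cap\overline{\mbox{cone}}(A+D(\epsilon))$ with $z\neq\cero$. Since $(f,\alpha)\in\cC^{a\#}_+$ and $-z\in\cC\setminus\{\cero\}$, the defining inequality of $\cC^{a\#}_+$ applied to the point $-z$ gives $f(-z)-\alpha\Vert -z\Vert>0$, that is $f(z)+\alpha\Vert z\Vert<0$, i.e. $g_{(f,\alpha)}(z)<0$; this contradicts the previous step. Hence $(-\cC)\cap\overline{\mbox{cone}}(A+D(\epsilon))=\{\cero\}$, so $\cero\in\mbox{Min}(\overline{\mbox{cone}}(A+D(\epsilon)),\cC)$ and therefore $\cero\in\mbox{Be}(A,\cC,D,\epsilon)$; translating back yields $x_0\in\mbox{Be}(A,\cC,D,\epsilon)$.

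There is no serious obstacle here: the only points requiring a little care are the passage from $A+D(\epsilon)$ to its generated cone and then to its closure — which is exactly where positive homogeneity and continuity of $g_{(f,\alpha)}$ enter — and the correct use of the strict-positivity condition built into $\cC^{a\#}_+$, which must be applied to $-z$ rather than to $z$.
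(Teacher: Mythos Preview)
Your proof is correct and follows essentially the same approach as the paper: reduce to $x_0=\cero$, extend the inequality $g_{(f,\alpha)}\geq 0$ to $\overline{\mbox{cone}}(A+D(\epsilon))$ via positive homogeneity and continuity, and derive a contradiction from any nonzero point in $(-\cC)\cap\overline{\mbox{cone}}(A+D(\epsilon))$. The only cosmetic difference is that the paper phrases the last step as an appeal to the strong monotonicity of $g_{(f,\alpha)}$ (citing \cite[Theorem~3.5]{Kasimbeyli2010}), whereas you unpack the defining inequality of $\cC^{a\#}_+$ directly---these are the same computation.
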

\begin{proof}
This proof is an adaptation of \cite[Theorem 1]{Gasimov2001}. It is not restrictive to assume that $x_0=0_X$.  Fix $(f,\alpha) \in \cC^{a\#}_+$ such that $f(x)+ \alpha\Vert x \Vert\geq 0$ for every $x \in A+D(\epsilon)$. Then $f(y)+ \alpha\Vert y \Vert\geq 0$ for every $y \in A+D(\epsilon)$.   We will show that $\cero \in \mbox{Be}(A,\cC,D,\epsilon)$.  Clearly, $f(y)+ \alpha\Vert y \Vert\geq 0$ for every $y \in {\mbox{cone}}(A+D(\epsilon))$ and, by continuity,  $f(y)+ \alpha\Vert y \Vert\geq 0$ for every $y \in \overline{\mbox{cone}}(A+D(\epsilon))$. Now, assume that  $\cero \not \in \mbox{Be}(A,\cC,D,\epsilon)$. Then there exists $\bar{y}\in \overline{\mbox{cone}}(A+D(\epsilon))\cap (-\cC\setminus \{\cero\})$. But strongly monotonicity of $g_{(f,\alpha)}$ (\cite[Theorem 3.5]{Kasimbeyli2010}) implies that $f(\bar{y})+\alpha \| \bar{y} \|<0$, a contradiction. 
\end{proof}

As a consequence of the former result and Theorem \ref{thm:escalar_neces_approximate_Benson}  (i) we obtain the  following characterization for  approximate proper efficiency in the sense of Benson.
\begin{corollary} \label{cor:escalar_characterization_approximate_Benson}
Let $X$ be a partially ordered normed space, $\cC$ the ordering cone,  $x_0 \in A\subset X$, $\epsilon\geq 0$, and  $D \in \overline{\cH}$. Assume that $(-\cC,\,\overline{\mbox{cone}}(A-x_0+D(\epsilon)))$ has SSP. Then $x_0 \in \mbox{Be}(A, \cC, D,\epsilon)$ if and only if there exists $(f,\alpha) \in \cC^{a\#}_+$ such that $\underset{x \in (A+D(\epsilon))\cup\{x_0\}}{min} \ f(x-x_0)+ \alpha\Vert x-x_0 \Vert$ is attained only at $x_0$.
\end{corollary}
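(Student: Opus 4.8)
The plan is to derive this corollary directly from the two theorems that immediately precede it, exploiting the fact that together they give a necessary condition and a sufficient condition whose statements can be matched.

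First I would observe that this is a straightforward combination: the forward implication ($\Rightarrow$) is precisely Theorem~\ref{thm:escalar_neces_approximate_Benson}~(i), which under the standing SSP hypothesis on the pair $(-\cC,\,\overline{\mbox{cone}}(A-x_0+D(\epsilon)))$ asserts that if $x_0 \in \mbox{Be}(A,\cC,D,\epsilon)$ then there exists $(f,\alpha)\in\cC^{a\#}_+$ with the minimum of $f(x-x_0)+\alpha\|x-x_0\|$ over $(A+D(\epsilon))\cup\{x_0\}$ attained only at $x_0$. So for this direction there is nothing to prove beyond citing that theorem.

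For the reverse implication ($\Leftarrow$), I would take the $(f,\alpha)\in\cC^{a\#}_+$ furnished by the hypothesis, for which $f(x-x_0)+\alpha\|x-x_0\|$ attains its minimum over $(A+D(\epsilon))\cup\{x_0\}$ only at $x_0$. Since $x_0$ itself lies in that set and $g_{(f,\alpha,x_0)}(x_0)=0$, the minimum value is $0$, so in particular $f(x-x_0)+\alpha\|x-x_0\|\geq 0$ for every $x\in A+D(\epsilon)$. That is exactly the hypothesis of Theorem~\ref{thm:escalar_sufficient_approximate_Benson}, which then yields $x_0\in\mbox{Be}(A,\cC,D,\epsilon)$. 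I would probably remark, as usual in this paper, that it is not restrictive to assume $x_0=0_X$, though here the translation is not even needed since both cited theorems already handle general $x_0$.

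I do not anticipate a genuine obstacle: the only mildly delicate point is making sure the quantifier ``attained only at $x_0$'' over the set $(A+D(\epsilon))\cup\{x_0\}$ correctly yields the pointwise inequality ``$\geq 0$ on $A+D(\epsilon)$'' needed to invoke Theorem~\ref{thm:escalar_sufficient_approximate_Benson} --- this follows because $g_{(f,\alpha,x_0)}(x_0)=0$ forces the minimum to be $0$. The SSP hypothesis is used only in the $\Rightarrow$ direction (via Theorem~\ref{thm:escalar_neces_approximate_Benson}); the $\Leftarrow$ direction requires no separation assumption at all, which is worth noting. Hence the whole proof is essentially one sentence in each direction.

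\begin{proof}
The implication $\Rightarrow$ is Theorem~\ref{thm:escalar_neces_approximate_Benson}~(i), whose hypothesis that $(-\cC,\,\overline{\mbox{cone}}(A-x_0+D(\epsilon)))$ has SSP is assumed here. Conversely, let $(f,\alpha)\in\cC^{a\#}_+$ be such that $g_{(f,\alpha,x_0)}(x)=f(x-x_0)+\alpha\|x-x_0\|$ attains its minimum over $(A+D(\epsilon))\cup\{x_0\}$ only at $x_0$. Since $x_0\in (A+D(\epsilon))\cup\{x_0\}$ and $g_{(f,\alpha,x_0)}(x_0)=0$, the minimum value equals $0$; hence $f(x-x_0)+\alpha\|x-x_0\|\geq 0$ for every $x\in A+D(\epsilon)$. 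By Theorem~\ref{thm:escalar_sufficient_approximate_Benson}, $x_0\in\mbox{Be}(A,\cC,D,\epsilon)$.
\end{proof}
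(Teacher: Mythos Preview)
Your proof is correct and matches the paper's approach exactly: the paper presents this corollary as an immediate consequence of Theorem~\ref{thm:escalar_neces_approximate_Benson}~(i) (for $\Rightarrow$) and Theorem~\ref{thm:escalar_sufficient_approximate_Benson} (for $\Leftarrow$), and your argument spells out precisely this, including the small observation that $g_{(f,\alpha,x_0)}(x_0)=0$ forces the minimum value to be $0$ so that the hypothesis of Theorem~\ref{thm:escalar_sufficient_approximate_Benson} is met.
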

Unfortunately, the necessary condition (ii) in Theorem \ref{thm:escalar_neces_approximate_Benson}  is not sufficient, as the following example shows.
\begin{example}
Take $X=\mathbb{R}^2$, the norm $\| (x,y) \| = \sqrt{x^2+y^2}$, $ \cC=\mbox{cone}(\{(1,1)\})$,  $A=\{(x,0)\in \mathbb{R}^2:-1\leq x \leq 0 \}$, and $D=\{(x,y)\in \mathbb{R}^2: \| (x,y)\|=1, x\geq 0, y\leq 0 \}$. Fix $\varepsilon=1$, $x_0=(0,0)\in A$, $f=(1,1)\in X^*$, and  $\alpha=\frac{4}{3}>0$.  Then $(0,0)\in \mbox{AMin}(g_{(f,\alpha,x_0)},A,\lambda)$ but $(0,0)\not\in \mbox{Be}(A, \cC, D,\epsilon)$.
\end{example}
\begin{proof}
If $\lambda=  \underset{d \in D(\epsilon)}{\mbox{inf}} f(d)+ \alpha\Vert d  \Vert =\frac{1}{3}$ and $g_{(f,\alpha,x_0)}(x,0)=x+\dfrac{4}{3} \mid x\mid=-\frac{1}{3}x \geq0> -\frac{1}{3}=-\lambda$,  for every $-1\leq x \leq 0 $, then $(0,0)\in \mbox{AMin}(g_{(f,\alpha,x_0)},A,\lambda)$.  However, $(-1,-1) \in \overline{\mbox{cone}}(A+D(\epsilon))$ because $(-1,-1)=(-1,0)+(0,-1)$, $(-1,0)\in A$, and $(0,-1)\in D(\epsilon)$. Therefore, $(-1,-1) \in (-\cC) \cap \overline{\mbox{cone}}(A+D(\epsilon))$, which implies that  $(0,0)\not\in \mbox{Be}(A, \cC, D,\epsilon)$.
\end{proof}
The preceding example leads us to the following natural question.
\begin{problem}\label{prob:escalarizacion_Benson_Approximate}
Is it possible to characterize approximate Benson proper efficient points via  $\epsilon$-approximate solutions assuming any extra conditions?
\end{problem}
We devote the rest of this section to study approximate Henig proper efficiency. An easy adaptation of the proof of Theorem \ref{thm:escalar_neces_approximate_Benson} gives the following necessary conditions for approximate proper solution in the sense of Henig. \\

\begin{theorem} \label{thm:escalar_nece_Henig_approximate}
Let $X$ be a partially ordered normed space, $\cC$ the ordering cone,  $A\subset X$ a subset, $\epsilon\geq 0$, and $D \in \overline{\cH}$.  Let $x_0 \in \mbox{He}(A, \cC,D, \epsilon)$ and the corresponding $\cC_{D,\epsilon}\in \cG(D)$.  If  $(-\cC_{D,\epsilon},\,\overline{\mbox{cone}}(A + D(\varepsilon)-x_0))$ has SSP, then there exists $(f,\alpha) \in {(\cC_{D,\epsilon})}^{a\#}_+ \subset \cC^{a\#}_+$ such that:
\begin{itemize}
\item[(i)] $\underset{x \in (A+D(\epsilon))\cup\{x_0\}}{\mbox{min}} \ f(x-x_0)+ \alpha\Vert x-x_0 \Vert$ is attained only at $x_0$.
\item[(ii)] $x_0 \in \mbox{AMin}(g_{(f,\alpha,x_0)},A,\lambda)$ for $\lambda=  \underset{d \in D(\epsilon)}{\mbox{inf}} f(d)+ \alpha\Vert d  \Vert$. 
\end{itemize}
\end{theorem}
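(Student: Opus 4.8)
The plan is to mimic the proof of Theorem~\ref{thm:escalar_neces_approximate_Benson} almost verbatim, letting the witnessing cone $\cC_{D,\epsilon}\in\cG(D)$ play the role that the ordering cone $\cC$ plays there. First I would translate so that $x_0=\cero$; this is harmless and gives the bonus fact $\cero\in A$, used in part~(ii). Write $\cK:=\overline{\mbox{cone}}(A+D(\epsilon))$. The hypothesis $\cero\in\mbox{He}(A,\cC,D,\epsilon)$ with witness $\cC_{D,\epsilon}$ now reads $\cK\cap(-\cC_{D,\epsilon})=\emptyset$, i.e.\ $\cC_{D,\epsilon}\subset -(X\setminus\cK)\subset -(X\setminus\cK)_0$; moreover $\cC\setminus\{\cero\}\subset\cC_{D,\epsilon}$ by the definition of $\cG(D)$, and $(\cC_{D,\epsilon})_0$ is a non-trivial cone (nonempty as it contains $\cC\setminus\{\cero\}$, proper since $D\neq\emptyset$ and $D\cap(-\cC_{D,\epsilon})=\emptyset$ force some $-d\notin(\cC_{D,\epsilon})_0$). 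Since the convex sets defining SSP are unchanged by adjoining $\cero$ to a cone, the separation theorems apply to $\cC_{D,\epsilon}$ exactly as to $(\cC_{D,\epsilon})_0$.

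Next I would run the separation step. From the assumption that $(-\cC_{D,\epsilon},\cK)$ has SSP, two applications of Remark~\ref{remSSPcomplementario}(ii) give that $(\cC_{D,\epsilon},-(X\setminus\cK)_0)$ has SSP. Together with $\cC_{D,\epsilon}\subset -(X\setminus\cK)_0$, which forces $\overline{\mbox{co}}(\cC_{D,\epsilon})\cap(-(X\setminus\cK)_0)\neq\{\cero\}$, this lets me invoke Theorem~\ref{thm:separacion_para_aplicar_Q_minimales} with $\cC_{D,\epsilon}$ as the first cone and $-(X\setminus\cK)_0$ as the second. It yields $(f,\alpha)\in(\cC_{D,\epsilon})^{a\#}_+$ with $f(x)+\alpha\|x\|<0<f(y)+\alpha\|y\|$ for every nonzero $x\in -\overline{\mbox{co}}(\cC_{D,\epsilon})$ and every nonzero $y\in X\setminus\mbox{int}((X\setminus\cK)_0)$; here $\cK$ is a non-trivial closed cone (it is $\neq X$ by the Henig condition and $\neq\{\cero\}$ since $\cero\in A$ forces $D(\epsilon)\subset\cK$), so $\cero$ is a limit of nonzero points of $\cK$, whence $\mbox{int}((X\setminus\cK)_0)=X\setminus\cK$ and the last condition reads simply $y\in\cK$, $y\neq\cero$. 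Finally, $\cC\setminus\{\cero\}\subset\cC_{D,\epsilon}$ makes $(f,\alpha)$ belong to $\cC^{a\#}_+$ too, which is the asserted inclusion $(\cC_{D,\epsilon})^{a\#}_+\subset\cC^{a\#}_+$.

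From here the two conclusions come out exactly as in the Benson case. For (i): $A+D(\epsilon)\subset\cK$, so $g_{(f,\alpha)}(x)=f(x)+\alpha\|x\|>0$ for every $x\in A+D(\epsilon)$ with $x\neq\cero$, whereas $g_{(f,\alpha)}(\cero)=0$; this is (i). For (ii): subadditivity and positive homogeneity of $g_{(f,\alpha)}$ give $g_{(f,\alpha)}(x)+g_{(f,\alpha)}(d)\geq g_{(f,\alpha)}(x+d)\geq 0$ for all $x\in A$ and $d\in D(\epsilon)$, using $x+d\in A+D(\epsilon)\subset\cK$; since $\cero\in A$ one has $D(\epsilon)\subset A+D(\epsilon)$, so $g_{(f,\alpha)}(d)>0$ for all $d\in D(\epsilon)$ and hence $\lambda:=\inf_{d\in D(\epsilon)}(f(d)+\alpha\|d\|)\geq 0$; taking the infimum over $d$ in the preceding inequality yields $g_{(f,\alpha)}(\cero)-\lambda=-\lambda\leq g_{(f,\alpha)}(x)$ for every $x\in A$, i.e.\ $\cero\in\mbox{AMin}(g_{(f,\alpha,x_0)},A,\lambda)$.

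I do not expect a deep obstacle; the steps needing genuine care, rather than mechanical transcription of the Benson argument, are the two set-complement manipulations: passing from SSP of $(-\cC_{D,\epsilon},\cK)$ to SSP of $(\cC_{D,\epsilon},-(X\setminus\cK)_0)$ via Remark~\ref{remSSPcomplementario}(ii), and the interior identity $X\setminus\mbox{int}((X\setminus\cK)_0)=\cK$, which hinges on $\cK$ being a non-trivial closed cone so that $\cero\notin\mbox{int}((X\setminus\cK)_0)$. One should also check that $(\cC_{D,\epsilon})_0$ qualifies as a non-trivial cone for Theorem~\ref{thm:separacion_para_aplicar_Q_minimales} and that an $(f,\alpha)$ admissible for $\cC_{D,\epsilon}$ is automatically admissible for $\cC$. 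The only conceptual input beyond Theorem~\ref{thm:escalar_neces_approximate_Benson} is the observation that, in the Henig setting, $\cC_{D,\epsilon}$ sits in exactly the structural position that $\cC$ does in the Benson setting, which is why this is an easy adaptation of that theorem.
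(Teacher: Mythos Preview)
Your proof is correct and follows precisely the approach the paper indicates: the paper does not supply an explicit argument for this theorem but says only that it is ``an easy adaptation of the proof of Theorem~\ref{thm:escalar_neces_approximate_Benson}'', and that is exactly what you carry out, with the witnessing dilation $\cC_{D,\epsilon}$ playing the role of $\cC$ throughout. Your extra care in checking that $(\cC_{D,\epsilon})_0$ is a non-trivial cone, in verifying the interior identity $\mbox{int}((X\setminus\cK)_0)=X\setminus\cK$, and in confirming the inclusion $(\cC_{D,\epsilon})^{a\#}_+\subset\cC^{a\#}_+$ makes explicit the details that the paper's proof of Theorem~\ref{thm:escalar_neces_approximate_Benson} (and hence its intended adaptation here) leaves implicit.
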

Since $\mbox{He}(A, \cC,D, \epsilon)\subset \mbox{Be}(A, \cC,D, \epsilon)$, Theorem \ref{thm:escalar_neces_approximate_Benson} also provides  necessary conditions for Henig approximate proper solutions.  Furthermore,  when the former inclusion becomes a set equality, Corollary \ref{cor:escalar_characterization_approximate_Benson} provides a characterization  Henig approximate proper solutions.  This leads to the last results in the work.  Before stating them, we introduce the notion of approximating family of cones.
\begin{definition}\label{defi:approximating_cones}
Let $X$ be a partially ordered normed space and $\cC$ the ordering cone.
\begin{itemize}
\item[(i)] Let  $\cF=\{\cC_n\subset X\colon n \in \N\}$ be a family of decreasing (with respect to the inclusion)  solid, closed,  and pointed convex cones. We say that $\cF$ approximates $\cC$ if $\cC\setminus \{\cero\}\subset \mbox{int}(\cC_n)$ eventually (i.e., there exists $n_0 \in \N$ such that $\cC\setminus \{\cero\}\subset \mbox{int}(\cC_n)$ for every $n \geq n_0$) and $\cC=\cap_n \cC_n$.
\item[(ii)] Let $\cF$ be an approximating family  of cones for $\cC$. We say that $\cF$ separates $\cC$ from a closed cone $\cK\subset X$ if $\cC\cap \cK=\{\cero\} \Rightarrow \cC_n\cap \cK=\{\cero\} \mbox{ eventually}$.
\end{itemize}
\end{definition}
Given $D\subset X\setminus \{\cero\}$, $\epsilon>0$, and $x \in X$, we denote by $\cS(D(\epsilon),x)$ the set of all families of cones that approximate $\cC$ and separate $\cC$ from the cone $-\overline{\mbox{cone}}(A-x+D(\epsilon))$. 
\begin{corollary} \label{cor:escalar_characterization_approximate_Henig}
Let $X$ be a partially ordered normed space, $\cC$ the ordering cone,  $x_0\in A\subset X$, $\epsilon\geq 0$, and  $D \in \overline{\cH}$. Assume that $(-\cC,\,\overline{\mbox{cone}}(A-x_0+D(\epsilon)))$ has SSP and $\cS(D(\epsilon),x_0)\not = \emptyset$. Then $x_0 \in \mbox{He}(A, \cC, D,\epsilon)$ if and only if there exists $(f,\alpha) \in \cC^{a\#}_+$ such that $\underset{x \in (A+D(\epsilon))\cup\{x_0\}}{min} \ f(x-x_0)+ \alpha\Vert x-x_0 \Vert$ is attained only at $x_0$. 
\end{corollary}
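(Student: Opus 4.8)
The plan is to derive this characterization of Henig $(D,\epsilon)$-efficiency by combining the already established characterization of Benson $(D,\epsilon)$-efficiency (Corollary~\ref{cor:escalar_characterization_approximate_Benson}) with a set-equality $\mbox{He}(A,\cC,D,\epsilon)=\mbox{Be}(A,\cC,D,\epsilon)$ that should hold precisely under the hypothesis $\cS(D(\epsilon),x_0)\neq\emptyset$. So the proof naturally splits into two implications, of which one is essentially free and the other requires the approximating-family machinery.

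First I would dispose of the sufficiency direction. Suppose $(f,\alpha)\in\cC^{a\#}_+$ makes $\min_{x\in(A+D(\epsilon))\cup\{x_0\}} f(x-x_0)+\alpha\|x-x_0\|$ attained only at $x_0$; in particular $f(x-x_0)+\alpha\|x-x_0\|\geq 0$ for every $x\in A+D(\epsilon)$. By Theorem~\ref{thm:escalar_sufficient_approximate_Benson} this already yields $x_0\in\mbox{Be}(A,\cC,D,\epsilon)$. To upgrade this to $x_0\in\mbox{He}(A,\cC,D,\epsilon)$ I would reduce to $x_0=0_X$ and argue as in Theorem~\ref{thm:Be=He}: from $0_X\in\mbox{Be}(A,\cC,D,\epsilon)$ we get $(-\cC)\cap\overline{\mbox{cone}}(A+D(\epsilon))=\{\cero\}$, and then pick a family $\cF=\{\cC_n\}\in\cS(D(\epsilon),\cero)$; since $\cF$ separates $\cC$ from $-\overline{\mbox{cone}}(A+D(\epsilon))$, we have $\cC_n\cap(-\overline{\mbox{cone}}(A+D(\epsilon)))=\{\cero\}$ for some large $n$, equivalently $\overline{\mbox{cone}}(A+D(\epsilon))\cap(-\cC_n)=\{\cero\}$. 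Taking $\cC':=\mbox{int}(\cC_n)\cup\{\cero\}$, this is an open convex cone with $\cC\setminus\{\cero\}\subset\cC'$ and $\overline{\mbox{cone}}(A+D(\epsilon))\cap(-\cC')=\emptyset$; one still needs $D\cap(-\cC')=\emptyset$ to conclude $\cC'\in\cG(D)$, which follows because $D\subset\overline{\mbox{cone}}(A+D(\epsilon))$ whenever $\cero\in\overline{A}$, or more robustly by shrinking the cone slightly since $D\subset X\setminus\{\cero\}$ and $\overline{\mbox{cone}}(D)\cap(-\cC)=\{\cero\}$. Hence $0_X\in\mbox{He}(A,\cC,D,\epsilon)$.

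For the necessity direction, assume $x_0\in\mbox{He}(A,\cC,D,\epsilon)$. Since $\mbox{He}(A,\cC,D,\epsilon)\subset\mbox{Be}(A,\cC,D,\epsilon)$ we immediately get $x_0\in\mbox{Be}(A,\cC,D,\epsilon)$, and then Corollary~\ref{cor:escalar_characterization_approximate_Benson} — whose hypothesis $(-\cC,\,\overline{\mbox{cone}}(A-x_0+D(\epsilon)))$ has SSP is exactly what we assumed — produces $(f,\alpha)\in\cC^{a\#}_+$ with $\min_{x\in(A+D(\epsilon))\cup\{x_0\}} f(x-x_0)+\alpha\|x-x_0\|$ attained only at $x_0$. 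Notice this half does not even invoke $\cS(D(\epsilon),x_0)\neq\emptyset$; that hypothesis is only needed to make the sufficiency argument land inside the Henig set rather than merely the Benson set.

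The main obstacle I expect is the bookkeeping in the sufficiency half: verifying that the cone extracted from the approximating family is genuinely a member of $\cG(D)$ (open, convex, pointed-free in the required sense, containing $\cC\setminus\{\cero\}$ in its interior, and disjoint from $-D$), and checking that the separation property of $\cF$ is applicable to the \emph{closed} cone $-\overline{\mbox{cone}}(A+D(\epsilon))$ with $\cC$ — i.e. that Definition~\ref{defi:approximating_cones}(ii) is being used with the right pair of cones. A secondary subtlety is whether one must pass from $\mbox{int}(\cC_n)$ (open) back to a closed cone at any point; keeping careful track of which cones are open and which are closed, and invoking Remark~\ref{remSSPcomplementario} if an SSP statement needs to be transported between a cone and its complement, is where the argument could go wrong if done carelessly. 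Everything else is a direct citation of Theorems~\ref{thm:escalar_sufficient_approximate_Benson}, the inclusion $\mbox{He}\subset\mbox{Be}$, and Corollary~\ref{cor:escalar_characterization_approximate_Benson}.
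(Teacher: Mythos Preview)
Your proof follows the same structure as the paper's: reduce both directions to Corollary~\ref{cor:escalar_characterization_approximate_Benson} via the equality $\mbox{He}(A,\cC,D,\epsilon)=\mbox{Be}(A,\cC,D,\epsilon)$. The only difference is that the paper obtains this equality in a single citation to \cite[Theorem~3.1]{Gutierrez2019} --- this is precisely what the hypothesis $\cS(D(\epsilon),x_0)\neq\emptyset$ is tailored to deliver --- whereas you reconstruct the nontrivial inclusion $\mbox{Be}\subset\mbox{He}$ by hand from Definition~\ref{defi:approximating_cones}. Your reconstruction is correct: after translating to $x_0=0_X\in A$ one has $D\subset\overline{\mbox{cone}}(A+D(\epsilon))$ automatically (since $0_X+\epsilon d\in A+D(\epsilon)$ for each $d\in D$, and similarly for $\epsilon=0$), so the verification that $\mbox{int}(\cC_n)\in\cG(D)$ goes through; just drop the ``$\cup\,\{0_X\}$'' in your definition of $\cC'$, since open cones in this paper exclude the origin and adding it would destroy openness. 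The necessity half is, as you observe, immediate from $\mbox{He}\subset\mbox{Be}$ and does not use $\cS(D(\epsilon),x_0)\neq\emptyset$.
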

\begin{proof}
By \cite[Theorem 3.1]{Gutierrez2019},  $x_0 \in \mbox{He}(A, \cC, D,\epsilon) \Leftrightarrow x_0 \in \mbox{Be}(A, \cC, D,\epsilon)$. Now,  Corollary \ref{cor:escalar_characterization_approximate_Benson} applies.
\end{proof}

Note that $\cS(D(\epsilon),x_0)\not=\emptyset$ whenever $X$ is finite-dimensional (\cite[Theorem~2.1]{Henig1982}) or if $\cC$ has a weakly compact base and $\overline{\mbox{cone}}(A-x_0+D(\epsilon))$ is weakly closed (\cite[Theorem~2.3]{Gutierrez2019}). Therefore, we have the following.
\begin{corollary} \label{cor:escalar_characterization_approximate_Henig_finito}
Let $X$ be a partially ordered normed space, $\cC$ the ordering cone,  $x_0\in A\subset X$, $\epsilon\geq 0$, and  $D \in \overline{\cH}$. Assume that $(-\cC,\,\overline{\mbox{cone}}(A-x_0+D(\epsilon)))$ has SSP and  at least one of the following assertions hold:
\begin{itemize}
\item[(i)] $X$ has finite dimension.
\item[(ii)] $\cC$ has a weakly compact base and $\overline{\mbox{cone}}(A-x_0+D(\epsilon))$ is weakly closed.
\end{itemize}
Then $x_0 \in \mbox{He}(A, \cC, D,\epsilon)$ if and only if there exists $(f,\alpha) \in \cC^{a\#}_+$ such that \[\underset{x \in (A+D(\epsilon))\cup\{x_0\}}{min} \ f(x-x_0)+ \alpha\Vert x-x_0 \Vert,\] is attained only at $x_0$.
\end{corollary}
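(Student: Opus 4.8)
The plan is to reduce Corollary~\ref{cor:escalar_characterization_approximate_Henig_finito} directly to Corollary~\ref{cor:escalar_characterization_approximate_Henig}, which already gives the desired equivalence under the hypothesis that $\cS(D(\epsilon),x_0)\not=\emptyset$ (together with the SSP assumption, which is carried over verbatim). Thus the only real work is to verify that each of the two listed assertions (i) and (ii) forces $\cS(D(\epsilon),x_0)\not=\emptyset$. This is exactly what the sentence preceding the corollary asserts, citing \cite[Theorem~2.1]{Henig1982} for the finite-dimensional case and \cite[Theorem~2.3]{Gutierrez2019} for the weakly-compact-base case; so the proof is essentially a matter of quoting these two facts and then invoking the earlier corollary.

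First I would fix the standing data: $X$ a partially ordered normed space with closed, convex, pointed ordering cone $\cC$, a point $x_0\in A\subset X$, $\epsilon\geq 0$, $D\in\overline{\cH}$, and assume $(-\cC,\,\overline{\mbox{cone}}(A-x_0+D(\epsilon)))$ has SSP. Case (i): if $X$ is finite-dimensional, then by \cite[Theorem~2.1]{Henig1982} every pointed closed convex cone can be separated from any closed cone meeting it only at the origin by a decreasing family of solid closed pointed convex cones approximating it; applying this to $\cC$ and the closed cone $-\overline{\mbox{cone}}(A-x_0+D(\epsilon))$ produces a member of $\cS(D(\epsilon),x_0)$, so this set is non-empty. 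Case (ii): if $\cC$ has a weakly compact base and $\overline{\mbox{cone}}(A-x_0+D(\epsilon))$ is weakly closed, then \cite[Theorem~2.3]{Gutierrez2019} supplies an approximating family of cones for $\cC$ that separates $\cC$ from $-\overline{\mbox{cone}}(A-x_0+D(\epsilon))$, again witnessing $\cS(D(\epsilon),x_0)\not=\emptyset$.

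In either case the hypotheses of Corollary~\ref{cor:escalar_characterization_approximate_Henig} are met, and that corollary yields: $x_0\in\mbox{He}(A,\cC,D,\epsilon)$ if and only if there exists $(f,\alpha)\in\cC^{a\#}_+$ such that $\min_{x\in(A+D(\epsilon))\cup\{x_0\}}f(x-x_0)+\alpha\|x-x_0\|$ is attained only at $x_0$, which is precisely the claimed statement. I do not anticipate a genuine obstacle here; the substantive content was already distilled into Corollary~\ref{cor:escalar_characterization_approximate_Henig}, and the present corollary merely records two standard sufficient conditions under which its non-emptiness hypothesis holds. The only point requiring a little care is checking that the cone to which the separation results are applied is literally the closed cone $-\overline{\mbox{cone}}(A-x_0+D(\epsilon))$ appearing in the definition of $\cS(D(\epsilon),x_0)$, and that the sign conventions match; once that bookkeeping is done, the proof is a two-line citation argument.

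\begin{proof}
By Corollary~\ref{cor:escalar_characterization_approximate_Henig} it suffices to prove that $\cS(D(\epsilon),x_0)\not=\emptyset$ under either assumption (i) or (ii). Recall that $\cC$ is closed, convex, and pointed, and that $-\overline{\mbox{cone}}(A-x_0+D(\epsilon))$ is a closed cone which, since $x_0\in\mbox{Be}(A,\cC,D,\epsilon)$ is not assumed a priori, need only be treated as an arbitrary closed cone for the purpose of separation.

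Assume (i), i.e., $X$ is finite-dimensional. By \cite[Theorem~2.1]{Henig1982}, applied to the pointed closed convex cone $\cC$ and the closed cone $-\overline{\mbox{cone}}(A-x_0+D(\epsilon))$, there exists a decreasing family $\cF=\{\cC_n\colon n\in\N\}$ of solid, closed, pointed convex cones such that $\cC\setminus\{\cero\}\subset\mbox{int}(\cC_n)$ eventually, $\cC=\cap_n\cC_n$, and $\cC_n\cap(-\overline{\mbox{cone}}(A-x_0+D(\epsilon)))=\{\cero\}$ eventually. Hence $\cF$ approximates $\cC$ and separates $\cC$ from $-\overline{\mbox{cone}}(A-x_0+D(\epsilon))$, so $\cF\in\cS(D(\epsilon),x_0)$ and $\cS(D(\epsilon),x_0)\not=\emptyset$.

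Assume (ii), i.e., $\cC$ has a weakly compact base and $\overline{\mbox{cone}}(A-x_0+D(\epsilon))$ is weakly closed. Then \cite[Theorem~2.3]{Gutierrez2019} provides an approximating family of cones for $\cC$ that separates $\cC$ from $-\overline{\mbox{cone}}(A-x_0+D(\epsilon))$, which again yields $\cS(D(\epsilon),x_0)\not=\emptyset$.

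In either case, the hypotheses of Corollary~\ref{cor:escalar_characterization_approximate_Henig} are satisfied, and that corollary gives $x_0\in\mbox{He}(A,\cC,D,\epsilon)$ if and only if there exists $(f,\alpha)\in\cC^{a\#}_+$ such that $\underset{x\in(A+D(\epsilon))\cup\{x_0\}}{\min}\ f(x-x_0)+\alpha\Vert x-x_0\Vert$ is attained only at $x_0$.
\end{proof}
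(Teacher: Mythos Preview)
Your proposal is correct and follows exactly the approach intended by the paper: the sentence preceding the corollary already records that either (i) or (ii) guarantees $\cS(D(\epsilon),x_0)\neq\emptyset$ via \cite[Theorem~2.1]{Henig1982} and \cite[Theorem~2.3]{Gutierrez2019}, respectively, after which Corollary~\ref{cor:escalar_characterization_approximate_Henig} applies verbatim. The paper gives no separate proof environment for this corollary precisely because it is the two-line citation argument you wrote out.
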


\section{Conclusions}
\label{sec:conclusions}
In this work, we provide optimal sufficient conditions with a sublinear function for  Henig global proper efficient points, Henig proper efficient points, super efficient points, Benson proper efficient points, Hartley proper efficient points, Hurwicz proper efficient points, Borwein proper efficient points, and tangentially Borwein proper efficient points; in the case of Benson proper efficiency the optimal condition becomes a characterization. The approach is done in a unified way considering such proper efficient points as $\cQ$-minimal points. For every type of proper efficient point we apply a separation property to a fixed $\cQ$-dilation of the ordering cone.  For future research we ask if it is possible to characterize $\cQ$-minimal points in general via SSP assuming any extra conditions. In the last part of the work, we adapt our arguments to obtain new characterizations of Benson and Henig approximate proper efficient points  through scalarizations.  We also provide necessary conditions for approximate Benson proper efficient points via $\epsilon$-approximate solutions and we ask if it is possible to extend such a result to a characterization assuming any extra condition. Our results are established  in the setting of normed spaces and they do not impose any kind of convexity and boundedness assumption. 

\backmatter

\bmhead{Acknowledgments}
We thank the referees for their valuable suggestions.  In particular, for the clarification on the relationship between the sets $\cC^{\#}$ and $\mbox{int}(\cC^*)$.

\section*{Declarations}

\begin{itemize}
\item Fernando Garc\'ia-Casta\~no acknowledge and .M. A. Melguizo-Padial acknowledge the financial support from the Spanish Ministry of Science, Innovation and Universities (MCIN/AEI) under grant PID2021-122126NB-C32, co-funded by the European Regional Development Fund (ERDF) under the slogan "A way of making Europe".

\item Conflict of interest/Competing interests.  The authors have no competing interests to declare that are relevant to the content of this article.
\end{itemize}


\bibliography{references}


\end{document}